\newtheorem{theo}{Theorem}[section]
\newtheorem{heuristics}{Heuristics}[section]
\newtheorem{lemma}[theo]{Lemma}
\newtheorem{prop}[theo]{Proposition}
\theoremstyle{remark}
\newtheorem{rem}[theo]{Remark}
\theoremstyle{definition}
\newcommand{\pij}{\partial_{ij}}
\newcommand{\pik}{\partial_{ki}}
\newcommand\nc\newcommand
\newcommand\dmo\DeclareMathOperator
\nc{\HH}{{\mathcal H}}
\nc{\x}{\bs{x}}
\nc{\ones}{\bs{1}}
\nc{\ee}{\bs{e}}
\nc{\J}{{\mathcal J}}
\nc{\lplus}{\boldsymbol{\ell}^+}
\nc{\Rtilde}{{\widetilde R}_k}
\nc{\dz}{{\bf d}_z}
\nc{\N}{\mathbb{N}}
\nc{\R}{\mathbb{R}}
\nc{\C}{\mathbb{C}}
\nc{\E}{\mathbb{E}}
\nc{\PP}{\mathbb{P}}
\nc{\bdm}{\begin{displaymath}}
\nc{\edm}{\end{displaymath}}
\nc{\bea}{\begin{eqnarray*}}
\nc{\eea}{\end{eqnarray*}}
\nc{\la}{\langle}
\nc{\ra}{\rangle}
\nc{\Cplus}{\mathbb{C}_+}
\nc{\Rplus}{\mathbb{R}^+}
\nc{\pitilde}{\tilde{\pi}}
\nc{\tv}{\mathrm{tv}}
\nc{\Cnabla}{\mathbb{C}^{\nabla}}
\nc{\im}{\mathrm{Im}}
\nc{\bs}{\boldsymbol}
\nc{\ti}{\tilde}
\nc{\Msub}{M_{\mathrm{sub}}}
\nc{\diag}{\mathrm{diag}}
\nc{\ii}{\mathrm{i}}
\numberwithin{equation}{section}
\nc{\bv}{\boldsymbol{\varepsilon}}
\nc{\dzz}{\boldsymbol{\delta}_z}
\nc{\tr}{\mathrm{tr}\,}
\nc{\cvgP}[1]{\xrightarrow[#1]{\mathcal P}}
\nc{\cvgD}{\xrightarrow[]{\mathcal D}}
\nc{\eqdef}{\stackrel{\triangle}{=}} 
\nc{\lrn}{\left|\!\left|\!\left|} 
\nc{\rrn}{\right|\!\right|\!\right|} 
\nc{\bell}{\boldsymbol{\ell}}
\nc{\blambdaR}{\boldsymbol{\lambda}_R}
\nc{\comment}[1]{\textcolor{blue}{[{\it #1}]}}
\nc{\rhoVn}{\rho(V_n)}	
\nc{\rhoV}{\rho(V)}	
\nc{\tkappa}{{\tilde \kappa}}
\nc{\Ncenter}{\stackrel{\circ}{\mathcal N}_n}
\nc{\Sn}{{\mathcal S}_n}
\nc{\Kn}{{\mathcal K}_n}
\dmo{\Imm}{Im}
\dmo{\Real}{Re}
\dmo*{\dist}{dist}
\dmo{\var}{var}
\dmo{\trace}{Tr}
\dmo{\rank}{rank}
\nc{\Rd}{R_n^{1/2}}
\nc{\Rdb}{\bar{R}_n^{1/2}}
\nc{\V}{\mathcal V}
\nc{\VV}{|\mathcal V|^2}
\nc{\dlp}{d_{\mathcal LP}}
\nc{\Qij}{Q_{[ij]}}
\nc{\cU}{{\mathcal U}}
\nc{\smax}{\sigma_{\max}} 
\nc{\smin}{\sigma_{\min}} 
\nc{\Xn}{X^{\mathcal N}}
\nc{\Yn}{Y^{\mathcal N}}
\nc{\Hn}{H^{\mathcal N}}
\nc{\Gn}{G^{\mathcal N}}
\nc{\tGn}{\widetilde{G}^{\mathcal N}}
\nc{\Fn}{F^{\mathcal N}}
\nc{\cFn}{{F}^{'\mathcal N}}
\nc{\vxi}{\vec{\xi}}
\nc{\vxin}{\vec{\xi}^{\mathcal N}}
\nc{\xin}{\xi^{\mathcal N}}
\nc{\bxin}{\bar{\xi}^{\mathcal N}}
\nc{\Ec}{\E_{\{1\}}}
\nc{\Oeta}[1]{{\mathcal O}_{\eta} \left( {#1}\right)}
\nc{\vOeta}[1]{\vec{\mathcal O}_{\eta} \left( {#1}\right)}
\nc{\specnorm}[1]{\left\| {#1}\right\|_{\mathrm{sp}}}
\nc{\posneq}{\succcurlyeq_{\neq}}
\nc{\bq}{\bs{q}}
\nc{\qt}{\widetilde{q}}
\nc{\bqt}{\bs{\qt}}
\nc{\qvec}{\vec{\bs q}}
\nc{\qvecstar}{\qvec_*}
\nc{\bqstar}{\bq_*}
\nc{\bqtstar}{\bqt_*}
\nc{\rr}{r}
\nc{\rt}{\widetilde{r}}
\nc{\br}{\bs{r}}
\nc{\brt}{\bs{\rt}}
\nc{\rvec}{\vec{\bs r}}
\nc{\rvecstar}{\rvec_*}
\nc{\brstar}{\br_*}
\nc{\brtstar}{\brt_*}
\nc{\pt}{\widetilde{p}}
\nc{\bp}{\bs{p}}
\nc{\bpt}{\bs{\pt}}
\nc{\pvec}{\vec{\bp}}
\nc{\vp}{\varphi}
\nc{\vpt}{\widetilde{\varphi}}
\nc{\bphi}{\bs{\varphi}}
\nc{\bphit}{\bs{\widetilde \varphi}}
\nc{\phivec}{\bs{\vec{\varphi}}}
\nc{\q}{\bs{q}}
\nc{\qtilde}{\bs{\widetilde q}}	
\nc{\p}{\bs{p}}
\nc{\ptilde}{\bs{\widetilde p}}
\nc{\tp}{\widetilde{\varphi}}
\nc{\bphitilde}{\bs{\widetilde \varphi}}
\nc{\bphivec}{\bs{\vec{\varphi}}}
\nc{\vecepsilon}{\vec{\boldsymbol{\varepsilon}}}
\nc{\Qa}{{\mathcal Q}(\bs{\alpha}, A, \bs{a})}
\nc{\Qb}{{\mathcal Q}(\bs{\alpha}, B, \bs{b})}
\nc{\Sab}{{\mathcal S}_{AB}}
\nc{\Sa}{{\mathcal S}_{A}}
\nc{\Sb}{{\mathcal S}_{B}}
\nc{\Sig}{A}
\dmo{\eps}{\varepsilon}
\dmo{\ls}{\lesssim}
\dmo{\gs}{\gtrsim}
\nc{\expo}[1]{\exp \left( #1 \rule{0mm}{3mm}\right)}
\dmo{\e}{\mathbb{E}}
\dmo{\pr}{\mathbb{P}}
\dmo{\un}{\mathbbm{1}}
\nc{\tran}{\mathsf{T}} 	
\nc{\scut}{\snot}		
\nc{\snot}{\sigma_0}	
\nc{\mN}{\mathcal{N}}
\nc{\vpmax}{\|\bphi\|_\infty}
\nc{\tpmax}{\|\bphitilde\|_\infty}
\nc{\bS}{\bs{S}}
\nc{\bd}{\bs{d}}
\nc{\bdt}{\bs{\tilde{d}}}
\nc{\dt}{\tilde{d}}
\nc{\HS}{\mathsf{HS}}
\nc{\Mo}{M_0}
\nc{\Res}{\bs{R}}
\nc{\Y}{\bs{Y}}
\nc{\Am}{\bs{A}^{(m)}}
\nc{\Vm}{\bs{V}^{(m)}}
\nc{\Ym}{\bs{Y}^{(m)}}
\nc{\Lm}{\check{\bs{L}}^{(m)}}
\nc{\wY}{{\widetilde Y}}
\nc{\wA}{{\widetilde A}}
\nc{\Blue}[1]{\textcolor{blue}{#1}}
\nc{\comN}[1]{\textcolor{ForestGreen}{#1}}
\newcommand{\Rr}{R^{\br}}
\title[ Positive solutions for large random linear systems]{Positive solutions for large random linear systems}
\author[P. Bizeul, J. Najim]{Pierre Bizeul, Jamal Najim} 
\date{\today}
\keywords{Linear systems; large random matrices; Gaussian concentration; Lotka-Volterra equations.}
\subjclass[2010]{Primary 15B52, 60G70, Secondary 60B20, 92D40}
\begin{document}

\begin{abstract}  Consider a large linear system where $A_n$ is a $n\times n$ matrix with independent real standard Gaussian entries, $\ones_n$ is a $n\times 1$ vector of ones and with unknown the $n\times 1$ vector $\x_n$ satisfying
$$
\x_n = \ones_n +\frac 1{\alpha_n\sqrt{n}} A_n \x_n\, .
$$
We investigate the (componentwise) positivity of the solution $\x_n$ depending on the scaling factor $\alpha_n$ as the dimension $n$ goes to $\infty$. We prove that there is a sharp phase transition at the threshold $\alpha^*_n =\sqrt{2\log n}$: below the threshold ($\alpha_n\ll \sqrt{2\log n}$), $\x_n$ has negative components with probability tending to 1 while above ($\alpha_n\gg \sqrt{2\log n}$), all the vector's components are eventually positive with probability tending to 1. At the critical scaling $\alpha^*_n$, we provide a heuristics to evaluate the probability that $\x_n$ is positive.

Such linear systems arise as solutions at equilibrium of large Lotka-Volterra systems of differential equations, widely used to describe large biological communities with interactions such as foodwebs for instance. 

In the domaine of positivity of the solution $\x_n$, that is when $\alpha_n\gg \sqrt{2\log n}$, we establish that the Lotka-Volterra system of differential equations whose solution at equilibrium is precisely $\x_n$ is stable in the sense that its jacobian
$$
\J(\x_n) = \diag(\x_n)\left(-I_n + \frac {A_n}{\alpha_n\sqrt{n}}\right)
$$
has all its eigenvalues with negative real part with probability tending to one.

Our results shed a new light and complement the understanding of feasibility and stability issues for large biological communities with interaction.

\end{abstract}

\maketitle



\section{Introduction} 
\label{sec:intro} Denote by $A_n$ a $n\times n$ matrix with independent Gaussian ${\mathcal N}(0,1)$ entries and by $\alpha_n$ a positive sequence. We  
are interested in the componentwise positivity of the $n\times 1$ vector $\x_n$, solution of the linear system
\begin{equation}\label{eq:equilibrium}
\x_n = \ones_n + \frac{1}{\alpha_n\sqrt{n}} A_n \x_n\ ,
\end{equation}
where $\ones_n$ is the $n\times 1$ vector with components 1. 

It is well-known since Geman \cite{geman-1986} that the spectral radius of $\frac{A_n}{\sqrt{n}}$ almost surely (a.s.) converges to 1, so that matrix $\left(I_n - \frac{A_n}{\alpha_n\sqrt{n}} \right)$ is eventually invertible as long as $\alpha_n\gg1$. In this case, vector $\x_n=(x_k)_{k\in[n]}$ where $[n]=\{1,\cdots, n\}$ writes
$$
\x_n = \left(I_n - \frac{A_n}{\alpha_n\sqrt{n}} \right)^{-1}\ones_n\quad \textrm{with}\quad x_k=\ee_k^* \left(I_n - \frac{A_n}{\alpha_n\sqrt{n}} \right)^{-1}\ones_n\ ,
$$
where $\ee_k$ is the $n\times 1$ canonical vector and $B^*$ denotes the transconjugate of matrix $B$ (or simply its transpose if $B$ is real). 

The positivity of the $x_k$'s is a key issue in the study of Large Lotka-Volterra systems, widely used in mathematical biology and ecology to model populations with interactions. 

Consider for instance a given foodweb and denote by $\x_n(t)=(x_k(t))_{k\in [n]}$ the vector of abundances of the various species within the foodweb at time $t$. A standard way to connect the various abundances is via a Lotka-Volterra (LV) system of equations that writes 
\begin{equation}\label{eq:LV}
\frac{dx_k(t)}{dt} = x_k(t)\, \left( 1 - x_k(t) + \frac{1}{\alpha_n \sqrt{n}} \sum_{\ell\in[n]} A_{k\ell} x_{\ell}(t)\right)\qquad \textrm{for}\quad k\in [n]\, ,
\end{equation}
where the interactions $(A_{k\ell})$ can be modeled as random in the absence of any prior information. Here, the $A_{k\ell}$'s are assumed to be i.i.d. ${\mathcal N}(0,1)$.
At the equilibrium $\frac{d\x_n}{dt}=0$, the abundance vector $\x_n$ is solution of \eqref{eq:equilibrium} and a key issue is the existence of a {\em feasible} solution, that is a solution $\x_n$ where all the $x_k$'s are positive. Dougoud et al. \cite{dougoud2018feasibility}, based on Geman and Hwang \cite{geman-1982}, proved that a feasible solution is very unlikely to exist if $\alpha_n\equiv\alpha$ is a constant. In fact, the CLT proved in \cite{geman-1982} asserts that for any fixed number $M$ of components
$$
\left(x_k-1\right)_{k\in[M]}\quad \xrightarrow[n\to\infty]{\quad \mathcal D\quad} \quad Z\ \sim\ {\mathcal N}(0,\sigma^2_{\alpha}\, I_M)\ ,
$$
where $\xrightarrow[]{\mathcal D}$ (resp. $\xrightarrow[]{\mathcal P}$) stands for the convergence in distribution (resp. in probability) and where $\sigma_{\alpha}^2 ={\mathcal O}(1)$. As an important consequence, vectors $\x_n$ with positive components will become extremely rare since
$$
\PP\{ x_k>0,\, k\in [M]\} \xrightarrow[n\to\infty]{} \left( \int_{\sigma_\alpha^{-1}}^\infty \frac{e^{-x^2/2}}{\sqrt{2\pi}} \, dx\right)^M \quad \Rightarrow\quad 
\PP \{ x_k>0,\, k\in [n]\} \xrightarrow[n\to\infty]{} 0\, .
$$

In this article, we consider a growing scaling factor $\alpha_n\to\infty$ and study the positivity of $\x_n$'s components in relation with $\alpha_n$. We find that there exists a critical threshold $$\alpha^*_n = \sqrt{2\log n}$$ below which feasible solutions exist with vanishing probability and above which feasible solutions are more and more likely to exist. More precisely, we prove the following:
\begin{theo}[Feasibility]\label{th:main}
Let $\alpha_n \xrightarrow[n\to\infty]{} \infty$  and denote by $\alpha_n^*=\sqrt{2\log n}$. Let $\x_n=(x_k)_{k\in [n]}$ be the solution of \eqref{eq:equilibrium}. 
\begin{enumerate}
\item If there exists $\varepsilon>0$ such that eventually $\alpha_n\le (1-\varepsilon) \alpha_n^*$ then 
$$
\mathbb{P}\left\{ \min_{k\in [n]} x_k>0\right\} \xrightarrow[n\to\infty]{} 0\, .
$$

\item If there exists $\varepsilon>0$ such that eventually $\alpha_n\ge (1+\varepsilon)\alpha_n^*$ then

$$
\mathbb{P}\left\{ \min_{k\in [n]} x_k>0\right\} \xrightarrow[n\to\infty]{} 1\, .
$$
\end{enumerate}
\end{theo}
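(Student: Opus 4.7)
Starting from \eqref{eq:equilibrium}, the first step is to isolate the first-order Gaussian contribution
\begin{equation*}
\x_n = \ones_n + \bs{z}_n + \bs{e}_n,\qquad \bs{z}_n := \frac{A_n \ones_n}{\alpha_n \sqrt{n}},\qquad \bs{e}_n := \frac{A_n(\x_n-\ones_n)}{\alpha_n \sqrt{n}},
\end{equation*}
in which $\bs{z}_n$ has i.i.d.\ ${\mathcal N}(0,1/\alpha_n^2)$ components while $\bs{e}_n$ absorbs all the dependence on $\x_n$. The strategy is to show that $\min_k x_k$ is governed entirely by $\bs{z}_n$, with $\bs{e}_n$ negligible at the relevant scale $1/\alpha_n$.

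The main term is straightforward: $(\alpha_n z_k)_{k\in[n]}$ is an i.i.d.\ standard Gaussian sample, so classical extreme-value asymptotics yield $\alpha_n\min_{k\in[n]} z_k = -\sqrt{2\log n}\,(1+o_{\PP}(1))$. Granted the error bound $\alpha_n\|\bs{e}_n\|_\infty = o_{\PP}(\sqrt{\log n})$, this gives
\begin{equation*}
\min_{k\in[n]} x_k = 1 - \frac{\sqrt{2\log n}}{\alpha_n}\bigl(1 + o_{\PP}(1)\bigr),
\end{equation*}
and both halves of the theorem follow directly by comparing $\alpha_n$ with $\sqrt{2\log n}$.

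The technical heart is the bound $\|\bs{e}_n\|_\infty \ll 1/\alpha_n$, the difficulty being that the $k$-th row of $A_n$ is not independent of $\x_n$. I would handle this via a leave-one-out decomposition: let $A_n^{(k)}$ be $A_n$ with its $k$-th row replaced by zeros and set $\x_n^{(k)} := (I_n - A_n^{(k)}/(\alpha_n\sqrt{n}))^{-1}\ones_n$, which is independent of that row. A one-line resolvent identity then shows that $\x_n - \x_n^{(k)}$ is a rank-one perturbation along the $k$-th column of $(I_n - A_n/(\alpha_n\sqrt{n}))^{-1}$, with scalar factor of conditional Gaussian type of size $O(1/\alpha_n)$. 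Splitting
\begin{equation*}
e_k = \frac{\ee_k^* A_n(\x_n^{(k)}-\ones_n)}{\alpha_n\sqrt{n}} + \frac{\ee_k^* A_n(\x_n-\x_n^{(k)})}{\alpha_n\sqrt{n}},
\end{equation*}
the first summand is conditionally Gaussian with variance $\|\x_n^{(k)}-\ones_n\|_2^2/(\alpha_n^2 n) = O(1/\alpha_n^4)$, using the standard operator-norm bound $\|A_n\|_{\mathrm{op}} \lesssim \sqrt{n}$ to get $\|\x_n^{(k)}-\ones_n\|_2 \lesssim \sqrt{n}/\alpha_n$; the second summand, via the rank-one formula, is a product of two conditionally Gaussian factors of sizes $\sqrt{n}$ and $\sqrt{\log n}/\alpha_n$, divided by $\alpha_n\sqrt{n}$. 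A Gaussian union bound over $k\in[n]$ then yields $\alpha_n^2\|\bs{e}_n\|_\infty \lesssim \sqrt{\log n}$ with high probability, which is more than enough.

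The main obstacle lies in making this union bound uniformly sharp: the gap between the two regimes is only $O(1)$ at scale $1/\alpha_n$, so any polynomial-in-$n$ slack would destroy the transition. This forces one to extract genuine sub-Gaussian tails rather than Markov-type estimates throughout the error analysis; it is precisely the role of the leave-one-out construction to restore, at each index $k$, the conditional independence that makes such tails available.
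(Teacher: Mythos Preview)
Your decomposition $x_k = 1 + z_k + e_k$ is exactly the paper's $x_k = 1 + \alpha_n^{-1}Z_k + \alpha_n^{-2}R_k$ (your $e_k$ is their $R_k/\alpha_n^2$), and your target bound $\alpha_n\|\bs{e}_n\|_\infty = o_{\PP}(\sqrt{\log n})$ is precisely their Lemma~\ref{lemma:main}. The reduction to extreme values of i.i.d.\ Gaussians is also the same. Where you differ is in the \emph{mechanism} for controlling the remainder.

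The paper does not use leave-one-out. Instead it introduces a smooth cutoff $\varphi_n = \varphi(s(n^{-1/2}A_n))$ that vanishes when the top singular value exceeds~$3$, sets $\widetilde R_k = \varphi_n R_k$, and shows that $A\mapsto \widetilde R_k(A)$ is globally Lipschitz in the Frobenius norm with a constant independent of $k$ and $n$. Tsirelson--Ibragimov--Sudakov concentration then gives sub-Gaussian tails for each $\widetilde R_k$ around its mean, hence $\E\max_k(\widetilde R_k - \E\widetilde R_k)\le K\sqrt{2\log n}$; a separate computation shows $\E\widetilde R_k = O(\alpha_n)$ uniformly in $k$. The cutoff is removed at the end since $\PP\{\varphi_n<1\}\to 0$.

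Your leave-one-out route is a legitimate alternative and would also work, but two points in your sketch need sharpening. First, all the conditional-variance bounds (e.g.\ $\|\x_n^{(k)}-\ones_n\|_2\lesssim \sqrt{n}/\alpha_n$) hold only on the event $\{\|A_n\|\lesssim \sqrt{n}\}$; you will need to carry that event through the union bound, which plays the same role as the paper's cutoff $\varphi_n$. Second, your description of the second summand as a product of ``two conditionally Gaussian factors'' is not literally correct: after the rank-one formula you get a factor $a_k^* Q e_k$, and $Q$ still depends on row $k$. You need one more Sherman--Morrison step to rewrite $Q e_k$ in terms of $Q^{(k)} e_k$, at which point $a_k^* Q^{(k)} e_k$ is genuinely conditionally Gaussian with $O(1)$ variance and the denominator $1 - a_k^* Q^{(k)} e_k/(\alpha_n\sqrt{n})$ is $1+o(1)$. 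Once this is done your union bound goes through and in fact yields the slightly sharper $\max_k|R_k| = O_{\PP}(\sqrt{\log n})$.

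In short: same skeleton, different engine for the remainder. The paper's Lipschitz-concentration argument is more compact and handles the uniformity in $k$ in one stroke; your leave-one-out argument is more hands-on but equally valid once the two points above are addressed.
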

Proof of Theorem \ref{th:main} is based on an analysis of the order of magnitude of the extreme values of the $x_k$'s, which relies on Gaussian concentration of Lipschitz functionals whose argument is matrix $A_n$. 

\begin{rem} In Figure \ref{fig:phase-transition}, we illustrate the transition toward feasibility depending on the scaling $\alpha_N(\kappa)=\kappa\sqrt{\log(N)}$. 
For $\kappa\in [0.5, 2.5]$, we plot the proportion of feasible solutions $\x_N(\kappa)$ obtained after 500 simulations. The transition occurs at the optimal scaling 
$\alpha_N^*=\sqrt{2\log(N)}$ corresponding to $\kappa=\sqrt{2}$.
\end{rem}

\begin{rem}
Notice that the convergence of $\frac 1{\alpha_n^*}$ to zero is extremely slow, as shown in Table \ref{tab:slow}, and could easily be mistaken with some constant scaling $\sigma<1$ where $\sigma = \frac 1{\alpha_n^*}$. 
\end{rem}

\begin{figure}[ht] 
\centering
  \includegraphics[width=\linewidth]{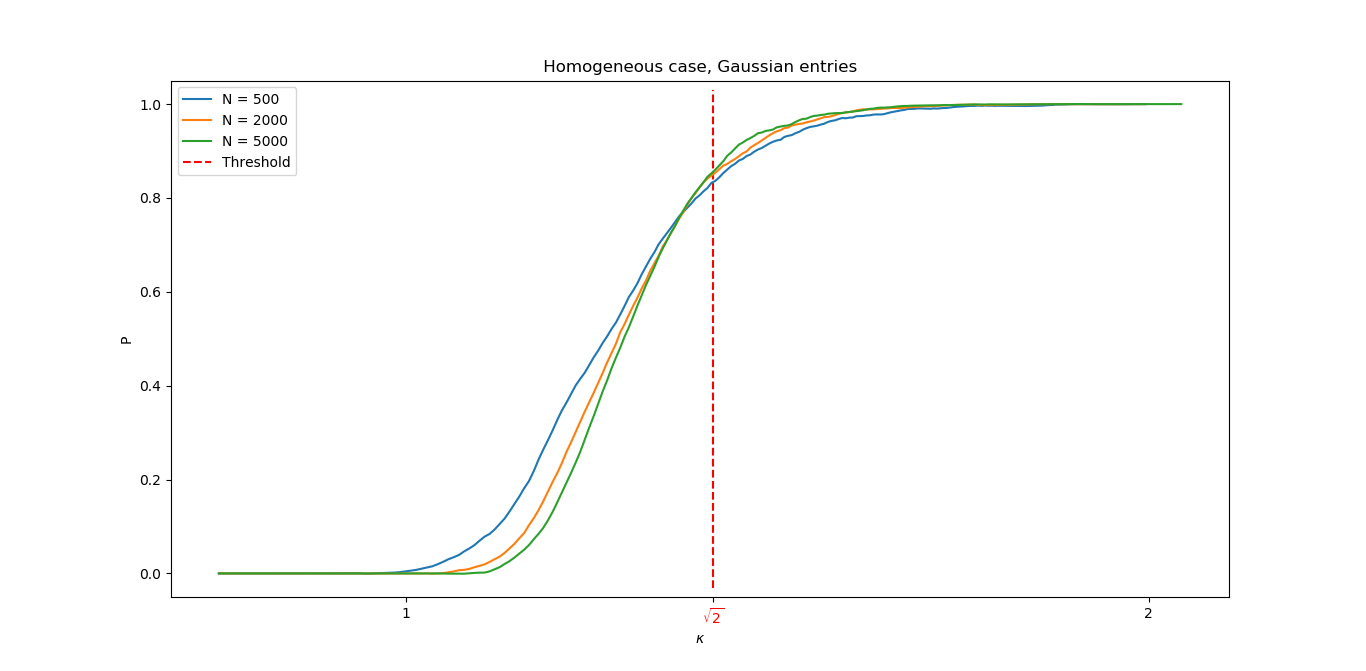}
 \caption{Transition toward feasibility. We consider different values of $N$, respectively 500 (blue), 2000 (yellow), 5000 (green). For each $N$ and each $\kappa$ on the $x$-axis, we simulate 500 $N\times N$ matrices $A_N$ and compute the solution $\x_N$ of \eqref{eq:equilibrium} at the scaling 
$\alpha_N(\kappa)=\kappa\sqrt{\log(N)}$. Each curve represents the proportion of feasible solutions $\x_N$ obtained for 500 simulations and has been smoothed by a Savistky-Golay filter. The red dotted vertical line corresponds to the critical scaling $\alpha_N^*=\sqrt{2\log(N)}$ for $\kappa=\sqrt{2}$. The proportion of feasible solutions ranges from 0 for $\kappa\le 1$ to 1 for $\kappa\ge2$.} 
\label{fig:phase-transition}
\end{figure}

To complement the picture, we provide the following heuristics at the critical scaling $\alpha_n^*=\sqrt{2\log n}$:
\begin{equation}\label{eq:heuristics}
\PP\left\{ \min_{k\in [n]} x_k>0\right\} \quad \approx\quad  1 - \sqrt{\frac {e}{4\pi \log n}}
+\frac{e}{8\pi\log n}\qquad \textrm{as}\quad n\to\infty\, .
\end{equation}

\renewcommand{\arraystretch}{1.8}
\begin{table}[!h]
\caption{The quantity $\frac 1{\alpha_n^*}=\frac 1{\sqrt{2\log n}}$ vanishes extremely slowly as $n$ increases.}
\begin{tabular}{|l||c|c|c|c|c|c|}
\hline
$n$ & $10^2$ & $10^3$ & $10^4$ & $10^5$ & $10^6$\\
\hline
$\frac 1{\alpha_n^*}$ & 0.33 & 0.27 & 0.23 & 0.21 & 0.19\\
\hline
\end{tabular}
\label{tab:slow}
\end{table}

Aside from the question of feasibility arises the question of {\em stability} : for a complex system, how likely a perturbation of the solution $\x_n$ at equilibrium  
will return to the equilibrium? Gardner and Ashby \cite{gardner1970connectance} considered stability issues of complex systems connected at random. Based on the circular law for large matrices with i.i.d. entries, May \cite{may1972will} provided a complexity/stability criterion and motivated the systematic use of large random matrix theory in the study of foodwebs, see for instance  Allesina et al. \cite{Allesina:2015ux}. Recently, Stone \cite{stone2018feasibility} and Gibbs {\it et al.} \cite{gibbs2018effect} revisited the relation between feasibility and stability.

We complement the information of Theorem \ref{th:main} by adressing the question of stability in the context of a Lotka-Volterra system \eqref{eq:LV} and prove that under the first condition of the theorem feasibility and stability occur simultaneously.

Recall that the solution at equilibrium $\x_n$ is stable if the Jacobian matrix ${\mathcal J}$ of the Lotka-Volterra system evaluated at $\x_n$, that is
\begin{equation}\label{eq:jacobian}
\J(\x_n) = \diag(\x_n)\left(-I_n + \frac {A_n}{\alpha_n\sqrt{n}}\right)
\end{equation}
has all its eigenvalues with negative real part.

\begin{theo}[Stability]\label{th:stability} Let $\x_n=(x_k)_{k\in [n]}$ be the solution of \eqref{eq:equilibrium}.  Denote by $\lplus=\limsup_{n\to \infty}\frac{\sqrt{2\log n}}{\alpha_n}$ and assume that $\lplus<1$. Denote by ${\mathcal S}_n$ the spectrum of $\J(\x_n)$ and let $\lambda \in {\mathcal S}_n$. Then 
$$
\max_{\lambda\in {\mathcal S}_n}  \min_{k\in [n]} \left| \lambda + x_k\right| \ \xrightarrow[n\to\infty]{\mathcal P}\ 0\, .
$$
Moreover,
\begin{equation}\label{eq:stability}
\max_{\lambda\in {\mathcal S}_n}\mathrm{Re}(\lambda) \ \le\  -(1-\lplus) + o_P(1)\, .
\end{equation}
\end{theo}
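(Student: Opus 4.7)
The plan is to view $\J(\x_n)$ as a perturbation of the diagonal matrix $-\diag(\x_n)$: write
\[
\J(\x_n) \;=\; -\diag(\x_n) \;+\; E_n, \qquad E_n \;:=\; \diag(\x_n)\,\frac{A_n}{\alpha_n\sqrt{n}}.
\]
Since $-\diag(\x_n)$ is normal with spectrum $\{-x_k:k\in[n]\}$, the Bauer-Fike theorem in spectral norm gives, for every $\lambda\in\Sn$,
\[
\min_{k\in[n]}|\lambda+x_k| \;\le\; \|E_n\|_{\mathrm{op}}.
\]
Both conclusions of the theorem then reduce to two quantitative estimates: an upper bound on $\|E_n\|_{\mathrm{op}}$ and a lower bound on $\min_k x_k$.

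For the first, I would bound $\|E_n\|_{\mathrm{op}}\le \|\x_n\|_\infty\cdot \|A_n\|_{\mathrm{op}}/(\alpha_n\sqrt{n})$ by submultiplicativity and invoke the Bai--Yin theorem $\|A_n\|_{\mathrm{op}}/\sqrt{n}\xrightarrow{\mathrm{a.s.}} 2$. The assumption $\lplus<1$ places us eventually inside the feasibility regime of Theorem~\ref{th:main}(2), and the Gaussian-concentration mechanism underlying that proof (Lipschitz concentration in $A_n$) should also deliver the quantitative estimate
\[
\max_{k\in[n]}|x_k-1| \;\le\; \frac{\sqrt{2\log n}}{\alpha_n}\,\bigl(1+o_P(1)\bigr),
\]
which I will take as a byproduct of the proof of Theorem~\ref{th:main}. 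It yields $\|\x_n\|_\infty \le 1+\lplus+o_P(1)$, whence $\|E_n\|_{\mathrm{op}} = O_P(1/\alpha_n) \cvgP{n\to\infty} 0$, proving the first conclusion.

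For the bound on real parts, fix $\lambda\in\Sn$ and let $k(\lambda)\in[n]$ realise $\min_k|\lambda+x_k|$. Since $x_{k(\lambda)}\in\R$,
\[
\Real(\lambda) \;=\; \Real\bigl(\lambda+x_{k(\lambda)}\bigr)\,-\,x_{k(\lambda)} \;\le\; \|E_n\|_{\mathrm{op}} \,-\, \min_{k\in[n]} x_k.
\]
The same extreme-value estimate gives $\min_k x_k \ge 1 - \sqrt{2\log n}/\alpha_n - o_P(1) \ge 1-\lplus-o_P(1)$ by the definition of $\lplus$. Supremising over $\lambda\in\Sn$ then yields \eqref{eq:stability}.

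The single nontrivial input is the uniform estimate $\max_k|x_k-1|\le \sqrt{2\log n}/\alpha_n\,(1+o_P(1))$: this is precisely the statistic whose phase transition at $\alpha_n^* = \sqrt{2\log n}$ is the content of Theorem~\ref{th:main}, and I expect it to be the main obstacle, obtained from the Gaussian concentration argument there. Once available, Theorem~\ref{th:stability} follows by the clean Bauer-Fike plus Bai--Yin computation above.
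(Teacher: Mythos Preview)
Your proposal is correct and follows essentially the same route as the paper: the authors also write $\J(\x_n)=-\diag(\x_n)+\diag(\x_n)\frac{A_n}{\alpha_n\sqrt{n}}$, apply Bauer--Fike to the normal matrix $-\diag(\x_n)$, bound the perturbation in operator norm by $\|\x_n\|_\infty\cdot\|A_n/\sqrt{n}\|/\alpha_n$, and feed in the two-sided extreme-value estimates $\min_k x_k\ge 1-\lplus-o_P(1)$ and $\max_k x_k\le 1+\lplus+o_P(1)$. Those estimates are indeed obtained exactly as you anticipate, from the decomposition $x_k=1+\alpha_n^{-1}Z_k+\alpha_n^{-2}R_k$ together with the Gaussian extreme-value asymptotics for $Z_k$ and the convergence $\max_k|R_k|/(\alpha_n\sqrt{2\log n})\to 0$ established in Lemma~\ref{lemma:main}.
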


Proof of Theorem \ref{th:stability} relies on standard perturbation results from linear algebra and on Theorem \ref{th:main}.

\subsection*{Organization of the paper} Proof of Theorem \ref{th:main} is provided in Section \ref{section:proof-main}. Theorem \ref{th:stability} is proved in Section \ref{section:proof-stability}. In Section \ref{section:various}, elements to bear out heuristics \eqref{eq:heuristics} are provided. We also formulate some concluding remarks for non-homogeneous linear systems where vector $\ones_n$ is replaced by a positive vector $\br_n$ 
and briefly mention possible extensions to non-Gaussian entries.

\subsection*{Acknowlegments} JN thanks Christian Mazza for introducing him to the study of large LV systems in theoretical ecology. The authors thank François Massol and Olivier Guédon for fruitful discussions. 

\section{Positive solutions: proof of Theorem \ref{th:main}} \label{section:proof-main}

We will use the following notations for the various norms at stake: if $\boldsymbol{v}$ is a vector then $\|\boldsymbol{v}\|$ stands for its euclidian norm; if $A$ is a matrix then $\|A\|$ stands for its spectral norm and $\|A\|_F=\sqrt{\sum_{ij} |A_{ij}|^2}$ for its Frobenius norm. Let $\varphi$ be a function from $\Sigma=\R$ or $\C$ to $\C$ then $\|\varphi\|_{\infty} =\sup_{x\in \Sigma} |\varphi(x)|$. 

\subsection{Some preparation and strategy of the proof}

Denote by $Q_n=\left( I_n - \frac {A_n}{\alpha_n\sqrt{n}}\right)^{-1}
$ the resolvent and by $s(B)$ the largest singular value of a given matrix $B$. Then it is well known that almost surely $s_n:=s(n^{-1/2} A_n)  \xrightarrow[n\to\infty]{} 2$ (see for instance \cite[Chapter 5]{book-bai-silverstein}) hence $s\left(\frac 1{\alpha_n\sqrt{n}} A_n\right) \xrightarrow[n\to\infty]{} 0$. In particular, the solution 
$$
\x_n =\left( x_k\right)_{k\in [n]} = \left( I_n - \frac {A_n}{\alpha_n\sqrt{n}}\right)^{-1} \ones_n = Q_n\, \ones_n\ ,
$$
with $I_n$ the $n\times n$ identity, is uniquely defined almost surely. In order to study the minimum of $\x_n$'s components, we partially unfold the above resolvent (in the sequel, we will simply denote $A, \alpha,\ones,Q$ instead of $A_n, \alpha_n, \ones_n,Q_n$) and write:
\begin{eqnarray}
x_k&=& \ee_k^* \x\quad = \quad \ee_k^*Q\, \ones \quad=\quad  \sum_{\ell=0}^\infty \ee^*_k \left( \frac A{\alpha\sqrt{n}}\right)^\ell \ones\, , \nonumber\\ 
&=&  1+ \frac 1{\alpha} \ee_k^* \left(n^{-1/2} A \right) \ones  + \frac 1{\alpha^2} \ee_k^* \left( n^{-1/2} A\right)^2 Q\, \ones\, \quad =\quad  1+\frac 1{\alpha} Z_k +\frac 1{\alpha^{2}} R_k\ , \label{eq:decomp}
\end{eqnarray}
where 
\begin{equation}\label{def:Gaussian-Remainder}
Z_k=\ee_k^* \left(n^{-1/2} A \right) \ones= \frac 1{\sqrt{n}} \sum_{i=1}^n A_{ki}
\qquad \textrm{and}\qquad R_k= \ee_k^* \left( n^{-1/2} A\right)^2 Q\, \ones\, .
\end{equation} 
Notice in particular that the $Z_k$'s are i.i.d. standard Gaussian. Before focusing on the analysis of the remaining term $R_k$, we recall standard results for extreme values of Gaussian random variables.

\subsubsection*{Extreme values of Gaussian random variables}
Consider the sequence $(Z_k)$ of standard Gaussian i.i.d. random variables and let
\begin{equation}\label{def:beta}
M_n=\max_{k\in [n]} Z_k\, , \quad \check M_n =\min_{k\in[n]} Z_k\, , \quad 
\alpha^*_n= \sqrt{2\log n}\qquad \textrm{and}\qquad \beta^*_n = \alpha^*_n - \frac 1{2\alpha^*_n}  \log ( 4\pi \log n)\ .
\end{equation}
Denote by $G(x)= e^{-e^{-x}}$ the cumulative distribution of a Gumbel distributed random variable. 

Then the following results are standard, see for instance \cite[Theorem 1.5.3]{leadbetter2012extremes}: for all $x\in \R$
\begin{eqnarray}
\mathbb{P} \left\{ \alpha^*_n( M_n-\beta^*_n) \le  x \right\} &\xrightarrow[n\to\infty]{}& G(x)\, ,\label{eq:MAX}\\
\mathbb{P} \left\{ \alpha^*_n(\check M_n +\beta^*_n) \ge -x \right\} &\xrightarrow[n\to\infty]{}& G(x)\label{eq:MIN}\, .
\end{eqnarray}

\subsubsection*{Strategy of the proof}
Eq. \eqref{eq:decomp} immediatly yields 
$$
\left\{ 
\begin{array}{lcl}
\min_{k\in[n]} x_k& \ge & 1+\frac 1{\alpha} \check M +\frac 1{\alpha^2} \min_{k\in[n]} R_k\ ,\\
\min_{k\in[n]} x_k& \le & 1+\frac 1{\alpha} \check M +\frac 1{\alpha^2} \max_{k\in[n]} R_k\, .
\end{array}\right.
$$
We rewrite the first equation as
\begin{equation}\label{eq:useful}
\min_{k\in[n]} x_k \ \ge\   1+\frac {\alpha^*_n}{\alpha_n}\left( \frac{\check M +\beta^*_n}{\alpha^*_n} - \frac{\beta^*_n}{\alpha^*_n} +\frac {\min_{k\in[n]} R_k}{\alpha^*_n \alpha_n} \right)
\ =\ 1+\frac{\alpha^*_n}{\alpha_n} \left( -1+o_P(1) +\frac {\min_{k\in[n]} R_k}{\alpha^*_n \alpha_n} \right)\, ,
\end{equation}
where we have used the fact that $(\alpha^*_n)^{-1} (\check M + \beta^*_n)=o_P(1)$. Similarly,
$$
\min_{k\in[n]} x_k \ \le\ 1+\frac{\alpha^*_n}{\alpha_n} \left( -1+o_P(1) +\frac {\max_{k\in[n]} R_k}{\alpha^*_n \alpha_n} \right)\, .
$$
The theorem will then follow from the following lemma.

\begin{lemma}\label{lemma:main} The following convergence holds
$$
\frac{\max_{k\in [n]} R_k}{\alpha_n \sqrt{2\log n}}\  \xrightarrow[n\to\infty]{\mathcal P} \ 0\qquad \textrm{and}\qquad 
\frac{\min_{k\in [n]} R_k}{\alpha_n \sqrt{2\log n}} \ \xrightarrow[n\to\infty]{\mathcal P} \ 0 \ .
$$
\end{lemma}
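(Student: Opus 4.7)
The plan is to apply Gaussian concentration to $R_k$ viewed as a function of the $n^2$ i.i.d.\ standard Gaussian entries of $A_n$, and then to take a union bound over $k \in [n]$. The union bound costs a factor of $\log n$ in the exponent, so one needs a Lipschitz constant $L$ for $R_k$ (with respect to the Frobenius norm on $A$) of order $o(\alpha_n)$; I shall produce an absolute constant $L = O(1)$, valid on a good event of probability $1 - e^{-cn}$.

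The key step is the Lipschitz bound. Setting $B = A/\sqrt n$ and $Q = (I - B/\alpha_n)^{-1}$, one has $R_k = \ee_k^* B^2 Q \ones$. Using the resolvent differentiation rule $\partial_{B_{ij}} Q = \alpha_n^{-1}\, Q \ee_i \ee_j^* Q$ together with the identity $BQ = \alpha_n(Q - I)$, a short computation gives the compact formula
\[
\partial_{B_{ij}} R_k \;=\; \alpha_n\, \un_{i=k}(x_j - 1) \;+\; (BQ)_{ki}\, x_j .
\]
Squaring, summing over $i, j$, and using $\alpha_n \|x - \ones\| = \|Bx\| \le \|B\|\,\|x\|$ together with $\|(BQ)^*\ee_k\| \le \|BQ\|$, one obtains
\[
\|\nabla_B R_k\|_F^2 \;\le\; 2\|B\|^2 \|x\|^2 \bigl(1 + \|Q\|^2\bigr) .
\]
Standard singular value bounds for i.i.d.\ Gaussian matrices give $\PP(\|B\| > 3) \le e^{-cn}$; on the complementary event $\Omega_n = \{\|B\| \le 3\}$ (and once $\alpha_n \ge 6$), one has $\|Q\| \le 2$ and $\|x\| \le 2\sqrt n$, so $\|\nabla_B R_k\|_F \le C\sqrt n$. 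Since $A = \sqrt n \, B$, this translates to $\|\nabla_A R_k\|_F \le C$ on $\Omega_n$ for an absolute constant $C$. The crucial point is the cancellation inside the displayed derivative: bounding the gradients of the two summands $\alpha_n^2(x_k - 1)$ and $\alpha_n Z_k$ separately would only yield a Lipschitz constant of order $\alpha_n$, which would be too large for the union bound to succeed.

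Next I would bound $\E R_k$. Since $\E Z_k = 0$, one has $\E R_k = \alpha_n^2 (\E x_k - 1)$. The invariances of the law of $A$ under $A \mapsto -A$ and under diagonal sign conjugations $A \mapsto DAD^{-1}$ (with $D = \diag(\pm 1)$) force $\E B^{2p+1} = 0$ and $\E B^{2p}$ to be a scalar multiple of the identity, equal to $(\E \mathrm{tr}(B^{2p})/n)\,I$. A short Wick-pairing calculation gives $\E \mathrm{tr}(B^{2p}) = O(1)$ as $n\to \infty$ for each fixed $p$ (for example $\E \mathrm{tr}(B^2) = 1$ and $\E \mathrm{tr}(B^4) = 1 + 2/n$), so $\ee_k^* \E B^{2p} \ones = O(1/n)$. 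Summing the resulting series yields $\E x_k - 1 = O(1/(n\alpha_n^2))$, and therefore $|\E R_k| = O(1/n) = o(\alpha_n \sqrt{\log n})$.

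To deploy the Lipschitz bound globally, I extend $R_k|_{\Omega_n}$ to a $C$-Lipschitz function $\widetilde R_k : \R^{n \times n} \to \R$ via Kirszbraun's theorem. Gaussian concentration then yields $\PP(|\widetilde R_k - \E \widetilde R_k| > t) \le 2 e^{-t^2/(2C^2)}$. Since $\widetilde R_k = R_k$ on $\Omega_n$, $\PP(\Omega_n^c) \le e^{-cn}$, and $|\E R_k| = o(\alpha_n\sqrt{\log n})$, a union bound over $k\in [n]$ gives, for every fixed $\varepsilon > 0$,
\[
\PP\!\left( \max_{k\in[n]} R_k > \varepsilon\, \alpha_n \sqrt{2\log n}\right) \;\le\; 2 n\, \exp\!\bigl(-\varepsilon^2 \alpha_n^2 \log n / C^2\bigr) + n e^{-cn} + o(1),
\]
which tends to $0$ since $\alpha_n \to \infty$. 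Applied to $-R_k$, the same argument handles the minimum. The main obstacle is obtaining the sharp Lipschitz bound above: without the cancellation identified, a termwise estimate of the gradients of $\alpha_n^2(x_k - 1)$ and $\alpha_n Z_k$ would produce Lipschitz of order $\alpha_n$, which cannot beat the union bound in the critical regime.
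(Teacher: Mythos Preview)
Your Lipschitz computation is correct and clean: the identity $\partial_{B_{ij}} R_k = \alpha_n\un_{i=k}(x_j-1) + (BQ)_{ki}x_j$ and the resulting bound $\|\nabla_A R_k\|_F \le C$ on the convex set $\{\|B\|\le 3\}$ are precisely the point, and the Kirszbraun-plus-union-bound route is a legitimate alternative to the paper's argument (which multiplies $R_k$ by a smooth cutoff $\varphi(\|B\|)$, bounds the expected maximum of the centred variables by $K\sqrt{2\log n}$ via the sub-Gaussian maximal inequality, and then handles the centring through exchangeability and Poincar\'e's inequality applied to a single coordinate).

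The genuine gap is your expectation step. The quantity $\E R_k$ does not exist: the resolvent $(I-B/\alpha_n)^{-1}$ has a non-integrable singularity along the codimension-one hypersurface $\{\det(I-B/\alpha_n)=0\}$ --- already for $n=1$ one checks $R_1 = \alpha A_{11}^2/(\alpha-A_{11})$, whose expectation is infinite --- so the identity $\E R_k = \alpha_n^2(\E x_k - 1)$ and the power-series expansion of $\E x_k$ are both meaningless as written. What your union bound actually requires is $|\E\widetilde R_k| = o(\alpha_n\sqrt{\log n})$ for the \emph{Kirszbraun extension}, and your symmetry argument says nothing about that object. The simplest repair is to replace Kirszbraun by an explicit smooth truncation $\widetilde R_k=\varphi(\|B\|)R_k$ and bound $\E\widetilde R_k$ directly; the paper does this via a one-step Taylor expansion in a single entry $A_{ki}$ and obtains $\E\widetilde R_k = O(\alpha_n)$, which already suffices for your union bound since $\alpha_n = o(\alpha_n\sqrt{\log n})$. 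If you want to salvage the symmetry idea, you must work instead with $\E[R_k\un_{\Omega_n}]$ (where the series does converge), control the discrepancy $\E[B^m]-\E[B^m\un_{\Omega_n}]$ through the exponential tail of $\Omega_n^c$, and supply a bound on $\E\tr(B^{2q})$ that is \emph{uniform in $q$} --- ``$O(1)$ for each fixed $q$'' is not enough to sum the infinite series.
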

Proof of Lemma \ref{lemma:main} requires a careful analysis of the order of magnitude of the extreme values of the remaining term $(R_k)_{k\in[n]}$. 
It is postponed to Section \ref{sec:proof-lemma-main}.
\subsection{Lipschitz property and tightness of $R_k(A)$}
Let $\varphi:\R^+\to [0,1]$ be a smooth function with values
$$
\varphi(x) =\begin{cases}
1& \textrm{if}\ x\in [0,2+\eta]\\
0& \textrm{if}\ x\ge 3
\end{cases}\ ,
$$
and strictly decreasing from $1$ to zero as $x$ goes from $2+\eta$ to $3$.
Recall that $s_n= s(n^{-1/2} A)$ is the largest singular value of the normalized matrix $n^{-1/2} A$ and denote by 
$$
\varphi_n := \varphi (s_n) = \varphi \left( s( n^{-1/2} A ) \right)\, .
$$ 
Notice that $\mathbb{P} \{ \varphi_n<1\} =\mathbb{P}\{ s_n>2+\eta\} \xrightarrow[n\to\infty]{}0$ (as a by-product of the a.s. convergence of $s_n$ to $2$).

Instead of directly working with $R_k$ we introduce the truncated quantity 
\begin{equation}\label{eq:truncated}
\widetilde R_k = \varphi_n R_k\ .
\end{equation} 
For a given $n\times n$ matrix $A$, we may consider its $2n\times 2n$ hermitized matrix $\HH(A)$ defined as
$$
\HH(A) =\left( \begin{array}{cc} 0&A\\ A^*&0\end{array}\right)\, .
$$
Recall that the singular values of $A$ together with their opposites are the eigenvalues of $\HH(A)$.

We prove hereafter that as a function of the entries of matrix $A$, the function $A\mapsto \widetilde R_k(A)$ is lipschitz. 

\begin{lemma}\label{lem:LIP} Let $\widetilde R_k$ be given by \eqref{eq:truncated}, then the function $A\mapsto \Rtilde(A)$ is Lipschitz, i.e. 
\begin{equation}\label{eq:lipschitz}
\left| \widetilde R_k(A) - \Rtilde(B)\right| \le K \|A-B\|_F\ ,
\end{equation}
where $\|A\|_F$ is the Frobenius norm and $K$ is a constant independent from $k$ and $n$.
\end{lemma}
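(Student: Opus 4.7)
The plan is to show that on the open set $U=\{A\in\R^{n\times n}:s(n^{-1/2}A)<3\}$ the function $A\mapsto\widetilde R_k(A)$ is smooth with Frobenius-norm gradient bounded by a constant $K$ independent of $n$ and $k$, and then to propagate the Lipschitz bound globally using the fact that $\widetilde R_k\equiv 0$ on $U^c$. The cutoff $\varphi_n$ is exactly what makes this tractable: on $U$ we have $\|n^{-1/2}A\|\le 3$, and, as soon as $\alpha_n\ge 6$ (which is eventually true since $\alpha_n\to\infty$), the resolvent $Q=(I-A/(\alpha\sqrt n))^{-1}$ satisfies $\|Q\|\le(1-3/\alpha)^{-1}\le 2$.

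On $U$, differentiate in a matrix direction $H$; set $\tilde A=n^{-1/2}A$ and $\tilde H=n^{-1/2}H$. The identities $D_H\tilde A=\tilde H$ and $D_H Q=Q\tilde H Q/\alpha$, together with the product rule applied to $\widetilde R_k=\varphi(s_n)\,R_k$, give
$$
D_H\widetilde R_k \;=\; \varphi'(s_n)\,(D_H s_n)\,R_k \;+\; \varphi(s_n)\Bigl[\ee_k^{*}(\tilde H\tilde A+\tilde A\tilde H)\,Q\,\ones+\alpha^{-1}\,\ee_k^{*}\tilde A^{2}Q\tilde H Q\,\ones\Bigr].
$$
Every summand inside the brackets has the form $\ee_k^{*}M_1\,\tilde H\,M_2\,\ones$ with $M_1,M_2$ products of the bounded factors $\tilde A$ and $Q$, so it is dominated by $\|M_1\|\,\|\tilde H\|\,\|M_2\|\,\|\ones\|$. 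Since on $U$ one has $\|M_i\|=O(1)$, $\|\ones\|=\sqrt n$ and $\|\tilde H\|\le\|H\|_F/\sqrt n$, the two factors of $\sqrt n$ cancel and each such term is $O(\|H\|_F)$. For the remaining summand one uses $|R_k|\le\|\tilde A\|^{2}\|Q\|\,\|\ones\|\le 18\sqrt n$ together with $|D_H s_n|\le\|\tilde H\|\le\|H\|_F/\sqrt n$ (spectral norm is $1$-Lipschitz), so that contribution is $O(\|H\|_F)$ as well. Summing, $|D_H\widetilde R_k|\le K\|H\|_F$ uniformly on $U$, i.e.\ $\|\nabla_A\widetilde R_k\|_F\le K$ with $K$ independent of $k$ and $n$.

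To conclude, connect arbitrary $A,B$ by the segment $M_t=A+t(B-A)$ and set $g(t)=\widetilde R_k(M_t)$. The function $g$ is continuous on $[0,1]$, $C^1$ with $|g'(t)|\le K\|A-B\|_F$ on the open set $I=\{t:M_t\in U\}$, identically zero on $[0,1]\setminus I$, and (because $\varphi(3)=0$ and $s_n$ is continuous) also vanishing on $\partial I$. Splitting an arbitrary subinterval $[t_1,t_2]$ at its first and last intersections with $I^{c}$ and applying the mean value theorem on each piece contained in $I$ gives $|g(t_2)-g(t_1)|\le K\|A-B\|_F(t_2-t_1)$, so taking $(t_1,t_2)=(0,1)$ yields \eqref{eq:lipschitz}. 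The main technical obstacle is the $n$-uniformity of $K$: it rests entirely on the dimensional cancellation between $\|\ones\|=\sqrt n$ (which $R_k$ inherits exactly once) and the factor $1/\sqrt n$ built into each of the normalizations $\tilde A$ and $\tilde H$. Any estimate that failed to exploit $\|\tilde H\|\le\|H\|_F/\sqrt n$ via an operator-norm bound, or that mishandled the single appearance of $\|\ones\|$, would introduce a spurious power of $n$ into $K$.
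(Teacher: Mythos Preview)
Your proof is correct and follows a somewhat different, in places cleaner, route than the paper's. The paper computes each partial derivative $\partial_{ij}\widetilde R_k$ separately and controls $\sum_{ij}|\partial_{ij}\widetilde R_k|^2$ term by term; to differentiate $s_n$ it invokes the explicit perturbation formula $\partial_{ij}s_n=n^{-1/2}u_iv_j$, which is only valid when the top singular value is simple, and then patches things up via Kato's perturbation theory along the segment $[A,B]$ and a density argument. You instead bound the full directional derivative $D_H\widetilde R_k$ at once using operator-norm inequalities and the observation $|D_Hs_n|\le\|\tilde H\|$, which is just the $1$-Lipschitz property of the spectral norm; combined with the convexity of $U=\{s_n<3\}$ and the vanishing of $\widetilde R_k$ on $\partial U$, this dispenses with the simple-spectrum bookkeeping entirely. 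One small caveat: your displayed identity $D_H\widetilde R_k=\varphi'(s_n)(D_Hs_n)R_k+\cdots$ presumes that $s_n$ is differentiable at the point in question, which fails on the measure-zero set where the top singular value is repeated, so ``$g$ is $C^1$ on $I$'' is not literally true. You can sidestep this either by Rademacher's theorem, or---more in the spirit of your argument---by treating the two factors of $\widetilde R_k=\varphi(s_n)\,R_k$ separately: bound $|\varphi(s_n(A))-\varphi(s_n(B))|\le\|\varphi'\|_\infty\|\tilde A-\tilde B\|$ directly from the Lipschitz property (no derivative of $s_n$ needed), bound $|R_k(A)-R_k(B)|$ via the smooth part of your gradient computation, and combine via the product rule for Lipschitz functions using $|R_k|\le 18\sqrt n$ and $|\varphi|\le 1$ on $U$.
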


\begin{proof} Notice that $\varphi(s_n)=0$ and $\varphi'(s_n)=0$ for $s_n\ge 3$, which implies that one may consider the bound $s_n\le 3$  in the following computations, for $\widetilde R_k$ or its derivatives would be zero otherwise. Recall the definition of the resolvent $Q=\left( I - \frac{A}{\alpha\sqrt{n}}\right)^{-1}$
then $Q^{-1}Q = I$ which yields $Q=I +\frac A{\alpha\sqrt{n}}Q$ from which we deduce that 
$$
\varphi_n \left\| Q \right\| \ \ \le\ \ \varphi_n\left( 1-\frac 1{\alpha} \left\| n^{-\frac 12}A\right\|\right)^{-1}\quad\le\quad  
\frac 1{1- 3\alpha^{-1}}\ \ \le \ \ 3
$$
for $n$ large enough. 

We first consider a matrix $A$ such that $\HH(A)$ has simple spectrum (i.e. with $2n$ distinct eigenvalues, each with multiplicity 1). We denote by $\pij=\frac{\partial \ }{\partial A_{ij}}$ and prove that 
the vector $\nabla \Rtilde(A)=\left( \pij \widetilde R(A),\ i,j\in [n]\right)$ satisfies
\begin{equation}\label{eq:gradient}
\| \nabla \Rtilde(A)\| =\sqrt{\sum_{ij} \left| \pij \widetilde R_k(A)\right|^2}\le K\, .
\end{equation}
To lighten the notations, we may drop the dependence of $\widetilde R_k$ in $A$. We begin by computing
\begin{eqnarray*}
\pij \widetilde R_k &=&\lim_{h\to 0 } \frac{ \Rtilde(A+h\ee_i\ee_j^*) - \Rtilde(A)}h\ ,\\
&=&  (\pij \varphi_n) R_k
+ \varphi_n\, \ee_k^*  \left( \pij  \left(n^{-\frac 12} A\right)^2\right)  Q\, \ones + \varphi_n\, \ee_k^*  \left( n^{-\frac 12} A\right)^2 \left( \pij Q\right) \ones \quad =:\quad T_{1,ij} + T_{2,ij}+T_{3,ij}\, .
\end{eqnarray*}
Straightforward computations yield
$$
\pij\left( n^{-\frac 12} A\right)^2 = \frac 1n \left( A\ee_i\ee_j^* + \ee_i \ee_j^*A\right)\quad \textrm{and}
\quad \pij Q = \frac 1{\alpha \sqrt{n}} Q \ee_i \ee_j^* Q\, .
$$
It remains to compute 
$
\pij \varphi_n  =  \varphi'(s_n) \pij s_n
$.
Recall that ${\mathcal H}(A)$ has a simple spectrum and notice that $A\mapsto s_n(A)$ is differentiable. In fact, since $s_n$ is simple, it is a simple root of the characteristic polynomial. In particular, it is not a root of its derivative and one can use the implicit function theorem to conclude.
Let $\bs{u}$ and $\bs{v}$ be respectively the left and right normalized singular vectors associated to $s(A)$. Then
$$
\HH(A)\bs{w} = s(A) \bs{w}\quad \textrm{with}\quad \bs{w}=\begin{pmatrix} \bs{u}\\ \bs{v}\end{pmatrix}\quad\textrm{and}\quad 
\|\bs{w}\|^2=2\ , 
$$
moreover $\bs{w}$ is (up to a scaling factor) the unique eigenvector of $s(A)$ since $s(A)$ has multiplicity one by assumption. We can now apply \cite[Theorem 6.3.12]{book-horn-johnson} to compute $s_n$'s derivative:
\begin{equation}\label{eq:sn-derivative}
\pij s(A) =  \frac 1{\|\bs{w}\|^2} \left( \bs{u}^* \ee_i \ee_j^* \bs{v} +\bs{v}^* \ee_j \ee_i^* \bs{u}\right) = \bs{u}^* \ee_i \ee_j^* \bs{v} \qquad \textrm{hence}\qquad 
\pij s_n = \frac 1{\sqrt{n}} \bs{u}^* \ee_i \ee_j^* \bs{v} 
\end{equation}
(recall that all the considered vectors are real). 

We first handle the term $T_{1,ij}$.
\begin{eqnarray*}
\sum_{ij} \left| T_{1,ij}\right|^2 &=& \sum_{ij} \left| \bs{u}^* \ee_i \ee_j^* \bs{v} \varphi'(s_n) \ee_k^*  \left( n^{-1/2} A\right)^2 Q \frac {\ones}{\sqrt{n}}
\right|^2\ ,\\
&\le &  3^6 \|\varphi'\|^2_{\infty} \sum_i \left|\bs{u}^* \ee_i\right|^2 \sum_j \left| \ee_j^* \bs{v}\right|^2  \quad \le\quad 3^6 \|\varphi'\|^2_{\infty}\, . 
\end{eqnarray*}
We now handle the term $T_{2,ij}$.
\begin{eqnarray*}
\sum_{ij} \left| T_{2,ij}\right|^2 &=& \sum_{ij} \left|\varphi_n  \ee_k^* \left( \frac A{\sqrt{n}} \ee_i \ee_j^* +  \ee_i \ee_j^* \frac A{\sqrt{n}} \right) 
Q \frac {\ones}{\sqrt{n}}
\right|^2\ ,\\
&\le& 2\varphi_n^2 \sum_i\left| \ee_k^* \frac A{\sqrt{n}} \ee_i\right|^2 \sum_j \left| \ee_j^* Q\frac {\ones}{\sqrt{n}}\right|^2 +  2\varphi_n^2 \sum_i\left| \ee_k^* \ee_i\right|^2 \sum_j \left| \ee_j^* \frac A{\sqrt{n}} Q\frac {\ones}{\sqrt{n}}\right|^2 \ ,\\
&\le& 2\varphi_n^2 \left( \ee_k^* \frac A{\sqrt{n}} \frac {A^*}{\sqrt{n}} \ee_k\right)\left( 
\frac {\ones^*}{\sqrt{n}}Q^*Q\frac {\ones}{\sqrt{n}}
\right)+2\varphi_n^2 \left( 
\frac {\ones^*}{\sqrt{n}}Q^*\frac {A^*A}n Q\frac {\ones}{\sqrt{n}}
\right)\quad \le \quad 2^2\times 3^4\, .
\end{eqnarray*}
The term $T_{3,ij}$ can be handled similarly and one can prove
$$
\sum_{ij} \left| T_{3,ij}\right|^2\quad \le \quad 3^8\, .
$$
Gathering all these estimates, we finally obtain the desired bound:
$$
\sqrt{\sum_{ij} \left| \pij \Rtilde \right|^2} \ \le \ 
\sqrt{3\sum_{ij} \left|T_{1,ij}\right|^2 +  3\sum_{ij} \left|T_{2,ij}\right|^2 + 3\sum_{ij}\left|T_{3,ij}\right|^2} 
\ \le\  K\, ,
$$
where $K$ neither depends on $k$ nor on $n$.

Having proved a local estimate over $\|\nabla \widetilde R_k(A)\|$ for each matrix $A$ such that $\HH(A)$ has simple spectrum, we now establish the Lipschitz estimate \eqref{eq:lipschitz} for two such matrices $A,B$.

Let $A,B$ such that $\HH(A)$ and $\HH(B)$ have simple spectrum and consider $A_t=(1-t)A+tB$ for $t\in [0,1]$. 
Notice first that the continuity of the eigenvalues implies that there exists $\delta>0$ sufficiently small such that $\HH(A_t)$ has a simple spectrum for $t\le \delta$ and $t\ge 1-\delta$. To go beyond $[0,\delta)\cup(1-\delta, 1]$ and prove that $\HH(A_t)$ has simple spectrum for the entire interval $[0,1]$ except maybe for a finite number of points, we rely on the argument in Kato \cite[Chapter 2.1]{kato2013perturbation} which states that apart from a finite number of $t_\ell$'s:  
$$
t_0=0<t_1<\cdots<t_L<t_{L+1}=1\ ,
$$ 
the number of eigenvalues of $\HH(A_t)$ remains constant for $t\in [0,1]$ and $t\neq t_\ell,\ell\in[L]$. Since $\HH(A_t)$ has simple spectrum for 
$t\in [0,\delta)\cup(1-\delta, 1]$, it has simple spectrum for all $t\notin \{t_\ell,\ell\in[L]\}$. 

We can now proceed:
\begin{eqnarray*}
\left| \widetilde R_k(A_{t_1}) - \widetilde R_k(A)\right| & = & \left| \lim_{\tau \nearrow t_1} \int_0^{\tau} \frac{d}{dt} \widetilde R_k(A_t) \, dt\right|
\quad =\quad  \left| \lim_{\tau \nearrow t_1} \int_0^{\tau} \nabla \widetilde R_k(A_t)\circ \frac{d}{dt} A_t \, dt\right|\ ,\\
&\le &  \lim_{\tau \nearrow t_1} \int_0^{\tau} \| \nabla \widetilde R_k(A_t)\| \times  \| B-A\|  \, dt\quad \le \quad K\, t_1\, \| B-A\|\ .
\end{eqnarray*}
By iterating this process, we obtain
$$
\left| \widetilde R_k(B) - \widetilde R_k(A)\right| \le \sum_{\ell=1}^{L+1} \left| \widetilde R_k(A_{t_\ell}) - \widetilde R_k(A_{t_{\ell-1}})\right|
\le \sum_{\ell=1}^{L+1} K( t_\ell - t_{\ell-1}) \|B-A\| = K\| B-A\|\, ,
$$
hence the Lipschitz property along the segment $[A,B]$ for $\HH(A)$ and $\HH(B)$ with simple spectrum. 

The general property follows by density of such matrices in the set of $n\times n$ matrices and by continuity of $A\mapsto \widetilde R_k(A)$. Let $A,B$ be given and $A_{\varepsilon}\to A$ and $B_{\varepsilon}\to B$ be such that $\HH(A_{\varepsilon})$ and $\HH(B_{\varepsilon})$ have simple spectrum then:
$$
\left| \widetilde R_k(B) - \widetilde R_k(A)\right|\le 
\left| \widetilde R_k(B_{\varepsilon}) - \widetilde R_k(B)\right| + K\| B_{\varepsilon}-A_{\varepsilon}\|
 + \left| \widetilde R_k(A_{\varepsilon}) - \widetilde R_k(A)\right|\xrightarrow[\varepsilon\to 0]{} K\| B-A\|\, .
$$
Proof of Lemma \ref{lem:LIP} is completed.
\end{proof}
We now use concentration arguments to obtain a bound on $\e \max_{k\in [n]} (\Rtilde - \e \Rtilde)$.
\begin{prop}\label{prop:CONC} Let $K$ be the constant obtained in Lemma \ref{lem:LIP}, then 
$$
\e \max_{k\in [n] } (\Rtilde - \e \Rtilde)\ \le \ K\sqrt{2\log n}\, .
$$
\end{prop}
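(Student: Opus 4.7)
The plan is to invoke Gaussian concentration for Lipschitz functionals combined with a standard sub-Gaussian maximum bound. Regard the entries $(A_{ij})_{i,j \in [n]}$ as a standard Gaussian vector in $\mathbb{R}^{n^2}$. By Lemma \ref{lem:LIP}, each map $A \mapsto \widetilde R_k(A)$ is $K$-Lipschitz with respect to the Frobenius norm, uniformly in $k$ and $n$. The Tsirelson--Ibragimov--Sudakov inequality then yields, for every $k \in [n]$ and every $t > 0$,
$$
\mathbb{P}\bigl( \widetilde R_k - \mathbb{E} \widetilde R_k \geq t \bigr) \ \leq\ \exp\!\left( - \frac{t^2}{2K^2}\right) ,
$$
i.e. the centered variables $\widetilde R_k - \mathbb{E} \widetilde R_k$ are sub-Gaussian with parameter $K^2$. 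This gives the corresponding moment generating function bound $\mathbb{E} \exp(\lambda (\widetilde R_k - \mathbb{E} \widetilde R_k)) \leq \exp(\lambda^2 K^2/2)$ for every $\lambda \in \mathbb{R}$.

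Next, I would apply the standard maximal inequality. For any $\lambda > 0$, by Jensen's inequality and the union-of-exponentials bound (without requiring independence),
$$
\exp\!\left( \lambda\, \mathbb{E} \max_{k \in [n]}(\widetilde R_k - \mathbb{E} \widetilde R_k)\right) \ \leq\ \mathbb{E} \max_{k \in [n]} e^{\lambda(\widetilde R_k - \mathbb{E} \widetilde R_k)} \ \leq\ \sum_{k=1}^n \mathbb{E} e^{\lambda(\widetilde R_k - \mathbb{E} \widetilde R_k)} \ \leq\ n\, e^{\lambda^2 K^2 /2}.
$$
Taking logarithms, dividing by $\lambda$, and optimizing with the choice $\lambda = \sqrt{2 \log n}/K$ produces the announced bound $\mathbb{E} \max_{k \in [n]} (\widetilde R_k - \mathbb{E} \widetilde R_k) \leq K \sqrt{2 \log n}$.

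There is no real obstacle here: the substantive work has already been carried out in Lemma \ref{lem:LIP}, where the Lipschitz constant $K$ is shown to be independent of both $k$ and $n$. The proposition is a direct consequence of two textbook ingredients (Gaussian concentration and the sub-Gaussian maximal bound), so the only thing to watch is that the Lipschitz constant really is dimension-free, which is precisely the content of \eqref{eq:gradient}.
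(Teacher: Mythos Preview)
Your proof is correct and follows essentially the same route as the paper: apply the Tsirelson--Ibragimov--Sudakov inequality (via the Lipschitz bound of Lemma~\ref{lem:LIP}) to get the sub-Gaussian MGF bound, then use Jensen plus the sum-of-exponentials trick and optimize over $\lambda$. The only cosmetic difference is that the paper invokes the MGF form of TIS directly rather than passing through the tail bound first.
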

\begin{proof} By applying Tsirelson-Ibragimov-Sudakov inequality \cite[Theorem 5.5]{book-boucheron-et-al-2013} to $\Rtilde(A)$ with the Lipschitz estimate obtained in Lemma \ref{lem:LIP}, we obtain the following exponential estimate:
$$
\e e^{\lambda (\Rtilde(A) - \e \Rtilde(A))} \ \le \ e^{\frac{\lambda^2 K^2}2}
$$ 
for all $\lambda\in \R$. We can now estimate the expectation of the maximum (we drop the dependence in $A$).
$$
\exp\left(\lambda \E \max_{k\in [n]} (\Rtilde - \e \Rtilde)\right)\ \le\  \E \exp\left(\lambda \max_{k\in [n]} (\Rtilde - \e \Rtilde)\right)\
\le\ \sum_{k=1}^n \E e^{\lambda (\Rtilde - \e \Rtilde)} \ \le \ n e^{\frac{\lambda^2 K^2}2}\, .
$$
Hence for $\lambda >0$
$$
\E \max_{k\in [n]} (\Rtilde - \e \Rtilde)\ \le\ \frac{\log n}{\lambda} + \frac{\lambda K^2}2\ =:\ \Phi(\lambda)\, .
$$
Optimizing in $\lambda$, we obtain $\lambda^*=\frac{\sqrt{2\log n}}{K}$ and $\Phi(\lambda^*)=K\sqrt{2\log n}$, which is the desired estimate.
\end{proof}


\begin{prop}\label{prop:TIGHT}
The following estimate holds\footnote{Notice that the proof does not rely on the fact that the entries are Gaussian. In particular, we did not use the integration by part formula $\E Xf(X)=\E f'(X)$, only valid for $X\sim {\mathcal N}(0,1)$.}:
$$
\E \widetilde R_k(A_n) \ =\ {\mathcal O}\left( \alpha_n \right)\,,
$$
uniformly in $k\in [n]$.
\end{prop}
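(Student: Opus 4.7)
The plan is to exploit two Gaussian-symmetry tricks—the sign flip $A\mapsto -A$ and permutation conjugation $A\mapsto PAP^\top$—to reduce $\E \Rtilde$ to an expectation carrying one fewer power of $\alpha$ than the naive estimate suggests, and then to close with a brute-force operator-norm bound made available by the cutoff $\varphi_n$. The saving of a full factor $\alpha$, which is all the statement asks for, comes entirely from the second symmetry.

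The first step is algebraic. Reapplying $\tfrac{A}{\alpha\sqrt n}$ to the fixed-point equation $\tfrac{A}{\alpha\sqrt n}\x = \x-\ones$ and extracting coordinate $k$ yields the identity
\[
R_k \ =\ \alpha^2(x_k-1)\,-\,\alpha\, Z_k.
\]
I would multiply this by $\varphi_n$ and take expectations. The term $\alpha\,\E[\varphi_n Z_k]$ then vanishes because $\varphi_n$ depends on $A$ only through its singular values, hence is even in $A$, whereas $Z_k=n^{-1/2}\sum_i A_{ki}$ is odd; this leaves
\[
\E \Rtilde \ =\ \alpha^2\,\E\bigl[\varphi_n (x_k-1)\bigr].
\]

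The second step uses permutation invariance. For any $n\times n$ permutation matrix $P$, the law of $A$ and the scalar $\varphi_n$ are unchanged under $A\mapsto PAP^\top$, while $\x=Q\ones$ is simply permuted; hence $\E[\varphi_n x_k]$ does not depend on $k$ and equals $n^{-1}\E[\varphi_n\,\ones^* Q\ones]$. Combined with the resolvent identity $Q-I=\tfrac{A}{\alpha\sqrt n}Q$, this yields the decisive cancellation
\[
\E \Rtilde \ =\ \frac{\alpha}{n^{3/2}}\,\E\bigl[\varphi_n\,\ones^* A Q \ones\bigr].
\]
For the last step, a deterministic bound suffices. On $\{\varphi_n>0\}$ one has $s_n\le 3$, so $\|A\|\le 3\sqrt n$; and for $\alpha_n$ large enough, the Neumann series gives $\varphi_n\|Q\|\le 2$. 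Cauchy–Schwarz then delivers
\[
\varphi_n\,|\ones^* A Q\ones|\ \le\ \|\ones\|^2\,\|A\|\,\varphi_n\|Q\|\ \le\ 6\,n^{3/2},
\]
so that $|\E \Rtilde|\le 6\,\alpha_n$ uniformly in $k$, as required.

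The main—really the only—subtle point is to recognise that the truncation by $\varphi_n$ is indispensable in the last step: without it, $\|Q\|$ is not integrable and the Cauchy–Schwarz bound collapses. The other potentially delicate issue, namely compatibility between the permutation symmetry and the cutoff, is harmless because $\varphi_n$ is a function of $s_n$, which is itself permutation-invariant; so all symmetry manipulations pass through $\varphi_n$ unhindered.
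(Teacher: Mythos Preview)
Your proof is correct and takes a genuinely different—and considerably shorter—route than the paper's.

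The paper proceeds entrywise: it rewrites $\E\Rtilde$ as $\frac{\alpha}{\sqrt n}\sum_{i,\ell}\E[A_{ki}\,\Psi_{i\ell}(A_{ki})]$ with $\Psi_{i\ell}=\varphi_n(-\delta_{i\ell}+Q_{i\ell})$, Taylor-expands $\Psi_{i\ell}$ in the single variable $A_{ki}$, uses independence and centredness of $A_{ki}$ to kill the zeroth-order term, and then bounds three resulting integrals $T_1,T_2,T_3$ by hand (this is where the derivatives $\pik\varphi_n$ and $\pik Q$ are computed explicitly and where the fourth moment $\E A_{11}^4$ enters). Your argument bypasses all of this by exploiting global symmetries of the law of $A$: the sign flip disposes of the $Z_k$ term, and the decisive cancellation—the one that removes the dangerous extra $\sqrt n$—comes from averaging $x_k$ over $k$ via permutation conjugation, after which a single operator-norm bound closes the proof.

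One remark on scope. The paper's footnote stresses that its argument only needs centred i.i.d.\ entries with finite fourth moment, not Gaussianity. Your use of $A\mapsto -A$ requires the entries to be symmetric, so as written your proof is slightly less general in that direction (though it needs no moment hypothesis at all beyond what the cutoff already provides). If you want to match the paper's generality, you can drop the sign flip entirely: the same permutation invariance gives $\E[\varphi_n Z_k]=n^{-3/2}\,\E[\varphi_n\,\ones^*A\ones]$, and the same Cauchy--Schwarz bound yields $|\alpha\,\E[\varphi_n Z_k]|\le 3\alpha$. With that tweak your argument works for any centred i.i.d.\ array, with no moment condition.
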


\begin{proof} Given an almost surely differentiable function $\Psi:\R\to \R$, we shall use the following Taylor expansion:
$$
\Psi(X) = \Psi(0) +X\int_0^1 \Psi'(sX)\, ds\, .
$$
We have
\begin{eqnarray*}
\E \widetilde R_k(A_n) &=& \frac 1n \sum_{i,\ell} \E\, \varphi_n A_{ki} \left[AQ\right]_{i\ell} \ =\ \frac {\alpha}{\sqrt{n}} \sum_{i,\ell} \E\, \varphi_n A_{ki} \left[\left( -I+\frac{A}{\alpha\sqrt{n}} +I\right)Q\right]_{i\ell}\ ,\\
&=&  \frac \alpha{\sqrt{n}} \sum_{i,\ell} \E A_{ki} \underbrace{\varphi_n \left( -\delta_{i\ell} + Q_{i\ell}\right)}_{:=\Psi_{i\ell}(A_{ki})} 
\ =\ \frac \alpha{\sqrt{n}} \sum_{i,\ell} \E A_{ki} \left( \Psi_{i\ell}(0) + A_{ki} \int_0^1 \Psi_{i\ell}(sA_{ki})\, ds\right)\ .\\ 
\end{eqnarray*}
Notice that $\Psi_{i\ell}(0)$ does not depend on $A_{ki}$ anymore, hence is independent from this random variable. In particular $\E A_{ki} \Psi_{i\ell}(0)=0$.
We denote by $\underline{F}$ a function $F$ evaluated at $sA_{ki}$, i.e. $\underline{F}=F(sA_{ki})$. We have
\begin{eqnarray*}
\E \widetilde R_k(A_n) &=& -\frac \alpha{\sqrt{n}} \sum_i \int_0^1 ds\, \E A_{ki}^2 \underline{\pik \varphi_n}  \\
&&\qquad +\frac \alpha{\sqrt{n}} \sum_{i,\ell} \int_0^1 ds\, \E A_{ki}^2 \left( \underline{\pik \varphi_n}\right) \underline{Q_{i\ell}} 
+\frac \alpha{\sqrt{n}} \sum_{i,\ell} \int_0^1 ds\, \E A_{ki}^2 \underline{\varphi_n} \left( \underline{\pik  Q_{i\ell}}\right) \, ,\\
&=:& T_1+T_2+T_3\, .
\end{eqnarray*}

Recall that $\pij \varphi_n= \varphi'(s_n) \pij s_n$, where $\pij s_n$ has been computed in \eqref{eq:sn-derivative}. Denote by $u_k=\boldsymbol{u}^* \ee_k$ and $v_i =\ee_i^* \boldsymbol{v}$ and recall that $\| \boldsymbol{v}\|=1$ and $|u_k|,|v_i|\le 1$. We have
\begin{eqnarray*}
|T_1|&=& \left| \frac \alpha{n}  \sum_{i}\int_0^1 ds\, \E\, A_{ki}^2 \underline{\varphi'(s_n)}\,\underline{u_k}\,\underline{ v_i} \right|\ \le\ \alpha\, \E\, A_{11}^2 \, . 
\end{eqnarray*}
Hence $T_1={\mathcal O} \left(\alpha\right)$. 
Now
\begin{eqnarray*}
|T_2|&=& \left| \frac \alpha{n}  \sum_{i,\ell}\int_0^1 ds\, \E\, A_{ki}^2 \underline{\varphi'(s_n)}\underline{u_k}\,\underline{ v_i} \, \underline{Q_{i\ell}}\right|
\ \le\ \alpha \int_0^1ds\, \E \,\underline{\varphi'(s_n)}\,\left| \boldsymbol{a}_k \underline{Q} (\ones/\sqrt{n})\right|\  \textrm{where}\ \boldsymbol{a}_k=\left( \frac{\underline{u_k} A_{ki}^2 \underline{v_i} }{\sqrt{n}}\right)_{i\in [n]}\, ,\\
&\le& \alpha \int_0^1 ds\, \E \| \boldsymbol{a}_k\|\ \le \ \alpha \int_0^1 ds\, \left( \E \| \boldsymbol{a}_k\|^2\right)^{1/2}\ \le \ \alpha \left(\E A_{11}^4\right)^{1/2}\, .
\end{eqnarray*}
Hence $T_2={\mathcal O} (\alpha)$. Finally
\begin{eqnarray*}
|T_3|&=& \left| 
\frac 1n \sum_{i,\ell}\int_0^1 ds\, \E\, A_{ki}^2 \underline{\varphi_n} \underline{Q_{ik}} \underline{Q_{i\ell}}\right|
\ =\ \left| \int_0^1ds\, \E \, \varphi_n (\boldsymbol{b}_k \underline{Q} \ones/\sqrt{n}) \right|\ 
\textrm{where}\ \boldsymbol{b}_k = \left( \frac{A_{ki}^2 Q_{ik}}{\sqrt{n}} \right)_{i\in[n]}\ ,\\
&\le& \int_0^1 ds\, \E \varphi_n \| \boldsymbol{b}_k\| \ \le \ 3\left( \E A_{11}^4\right)^{1/2}\, .
\end{eqnarray*}
Hence $T_3={\mathcal O} (1)$.
 
We have finally proven that $\E \widetilde R_k(A_n) ={\mathcal O} \left(\alpha \right)$ uniformly in $k$, which concludes the proof of the lemma.
\end{proof}

We are now in position to prove Lemma \ref{lemma:main}.

\subsection{Proof of Lemma \ref{lemma:main}}\label{sec:proof-lemma-main}
We first establish the convergence for $\max_{k\in [n]}\Rtilde(A) - \widetilde R_1(A)$. Notice that the r.v. $\max_{k\in [n]} \Rtilde(A) - \widetilde R_1(A)$ is nonnegative hence by Markov inequality,
\begin{eqnarray*}
\mathbb{P} \left\{ \frac{\max_{k\in [n]} \Rtilde(A) - \widetilde R_1(A)}{\alpha \sqrt{2\log n}} \ge \varepsilon \right\} 
&\le&\frac{\e \left(\max_{k\in [n]} \Rtilde(A) - \widetilde R_1(A)\right)}{\varepsilon \alpha \sqrt{2\log n}}
\ ,\\
&=& \frac{\e \left(\max_{k\in [n]} \left(\Rtilde(A) -\E\Rtilde(A) + \E\Rtilde(A)\right) - \widetilde R_1(A)\right)}{\varepsilon \alpha \sqrt{2\log n}}
\ ,\\
&\le& \frac{\e \left(\max_{k\in [n]} \left(\Rtilde(A) -\E\Rtilde(A) \right) + \max_{k\in[n]} \E\Rtilde(A) - \widetilde R_1(A)\right)}{\varepsilon \alpha \sqrt{2\log n}}
\ ,\\
&\le& \frac{\e \left(\max_{k\in [n]} \left(\Rtilde(A) -\E\Rtilde(A) \right) \right)}{\varepsilon \alpha \sqrt{2\log n}} + \frac{ \max_{k\in[n]} \E\Rtilde(A) - \e \widetilde R_1(A)}{\varepsilon \alpha \sqrt{2\log n}}\, .
\end{eqnarray*}
Now since the random variables $\widetilde R_1(A), \dots, \widetilde R_n(A)$ are exchangeable, $\max_{k\in[n]} \E\Rtilde(A) = \e \widetilde R_1(A)$ and
$$
\mathbb{P} \left\{ \frac{\max_{k\in [n]} \Rtilde(A) - \widetilde R_1(A)}{\alpha \sqrt{2\log n}} \ge \varepsilon \right\} 
\,\, \le\,\, \frac{\e \max_{k\in [n]} \left(\Rtilde(A) - \e \widetilde R_k(A)\right)}{\varepsilon \alpha \sqrt{2\log n}}
\,\, \le \,\, \frac K{\varepsilon \alpha}\ \xrightarrow[n\to\infty]{} \ 0
$$
by Proposition \ref{prop:CONC}. This implies that 
\begin{equation}\label{eq:probab2}
\frac{\max_{k\in [n]} \Rtilde(A) - \widetilde R_1(A)}{\alpha \sqrt{2\log n}} \xrightarrow[n\to\infty]{\mathcal P} 0\ .
\end{equation}
We now prove that 
\begin{equation}\label{eq:probab}
\frac{\widetilde R_1(A)}{\alpha \sqrt{2\log n}}\xrightarrow[n\to\infty]{\mathcal P}0\, .
\end{equation} By Proposition \ref{prop:TIGHT}, $\E \widetilde R_1(A)={\mathcal O}(\alpha)$ hence $\E \widetilde R_1(A)/(\alpha\sqrt{2\log(n)}) \to 0$. Applying Poincaré's inequality to the Lipschitz functional $A\mapsto \widetilde R_1(A)$ (cf. Lemma \ref{lem:LIP}), we can bound $\widetilde R_1(A)$'s variance by $L^2$ and obtain 
$$
\mathbb{P}\left(\left| \frac{\widetilde R_1(A) - \E \widetilde R_1(A)}{\alpha \sqrt{2\log n}}\right| >\delta \right) \ \le\  \frac{\var (\widetilde R_1(A))}{2 \delta^2 \alpha^2 \log n} 
\ \le \ \frac{L^2} {2 \delta^2 \alpha^2 \log n}\quad\xrightarrow[n\to\infty]{}\quad 0\, .
$$
This yields \eqref{eq:probab}. Combining \eqref{eq:probab2} and \eqref{eq:probab} finally yields:
$$
\frac{\max_{k\in [n]} \widetilde R_k(A)}{\alpha \sqrt{2\log n}}\  \xrightarrow[n\to\infty]{\mathcal P} \ 0\ .
$$
In order to obtain the result for the untilded quantities, we write
\begin{eqnarray*}
\mathbb{P} \left\{ \left| \frac{\max_{k} R_k(A) }{\alpha \sqrt{2\log n}} \right| >\varepsilon\right\} &\le & 
\mathbb{P} \left\{ \left| \frac{\max_{k} R_k(A) -\max_{k} \Rtilde(A)}{\alpha \sqrt{2\log n}} \right| >\varepsilon/2\right\} 
+ \mathbb{P} \left\{ \left| \frac{\max_{k} \Rtilde(A) }{\alpha \sqrt{2\log n}} \right| >\varepsilon/2\right\} \ ,\\
&=& \mathbb{P}\{ \varphi_n <1\} +  \mathbb{P} \left\{ \left| \frac{\max_{k} \Rtilde(A) }{\alpha \sqrt{2\log n}} \right| >\varepsilon/2\right\} 
\xrightarrow[n\to\infty]{} 0\, .
\end{eqnarray*}
 One proves the second assertion similarly, which concludes the proof of Lemma \ref{lemma:main}.

\section{Stability: proof of Theorem \ref{th:stability}}\label{section:proof-stability}

In order to study the stability of large Lotka-Volterra systems, we are led to study the matrix 
$$
\J(\x_n) = \diag(\x_n)\left(-I_n + \frac {A_n}{\alpha_n\sqrt{n}}\right)\ .
$$
%
We first establish the following estimates
\begin{equation}\label{eq:estimates-stability}
\begin{cases}
\min_{k\in[n]} x_k \ge 1-\lplus -o_P(1)\, ,\\
\max_{k\in [n]} x_k \le 1+\lplus +o_P(1)\, .
\end{cases}
\end{equation}
The first estimate immediatly follows from \eqref{eq:useful} together with Lemma \ref{lemma:main}. From $x_k$'s decomposition \eqref{eq:decomp} we have
$$
\max_{k\in[n]} x_k \le  1+\frac {M_n}{\alpha_n} +\frac{\max_{k\in[n]} R_k}{\alpha_n^2} = 1+\frac{\alpha^*_n}{\alpha_n}\left( \frac{M_n-\beta^*_n}{\alpha^*_n} +\frac{\beta^*_n}{\alpha^*_n} 
+ \frac{\max_{k\in[n]} R_k}{\alpha^*_n\alpha_n}\right) \le 1+\lplus +o_P(1)\, ,
$$
where the last inequality follows from Lemma \ref{lemma:main} and the fact that $\left(\alpha^*_n\right)^{-1}(M_n-\beta^*_n)\xrightarrow[]{\mathcal P} 0$.

We now compare the spectra of matrices ${\mathcal D}(\x_n)= -\diag(\x_n)$ and $\J(\x_n)$ by relying on Bauer and Fike's theorem \cite[Theorem 6.3.2]{book-horn-johnson}:
for every $\lambda\in {\mathcal S}_n$, there exists a component $x_k$ of vector $\x_n$ such that 
\begin{eqnarray*}
|\lambda + x_k| & \le & \left\| \diag(\x_n)\frac{A_n}{\alpha_n\sqrt{n}} \right\| \ \le \ \frac 1{\alpha_n} \left\| \diag(\x_n) \right\| \, \left\| \frac {A_n}{\sqrt{n}}\right\|
\ \stackrel{(a)}{\le} \ \frac 1{\alpha_n} \left( 1+\lplus +o_P(1) \right) \left( 2+o_P(1) \right)\\
&=& o_P(1)\, .
\end{eqnarray*}
where $(a)$ follows from the second estimate in \eqref{eq:estimates-stability} and from the spectral norm estimate. Notice that the majorization above is uniform for $\lambda\in {\mathcal S}_n$. The first part of the theorem is proved. Finally,
$$
\mathrm{Re} (\lambda) +x_k \le |\lambda +x_k| =o_P(1)\qquad \Rightarrow \qquad \mathrm{Re} (\lambda) \le - \min_{k\in [n]} x_k + o_P(1)\, .
$$
The estimate \eqref{eq:stability} finally follows from the first estimate in \eqref{eq:estimates-stability}.

\section{Heuristics at critical scaling, non-homogeneous systems and non-gaussian entries}\label{section:various}
\subsection{A heuristics at the critical scaling}
We provide here a heuristics to compute the probability that a solution $\x_n$ is feasible at critical scaling $\alpha_n^*=\sqrt{2\log n}$.
\begin{heuristics} The probability that a solution is feasible at the critical scaling $\alpha_n^*$ is asymptotically given by
\begin{equation}\label{eq:critical}
\mathbb{P}( x_k>0,\ k\in [n])\ \approx\ 
 1- \sqrt{\frac {e}{4\pi\log n}}+ \frac{e}{8\pi\log n} =: H_1(n)\, .
 \end{equation}
\end{heuristics}

In Figure \ref{fig:critical}, we compare the heuristics with results from simulations. 

\begin{figure}[ht] 
\centering
  \includegraphics[width=\linewidth]{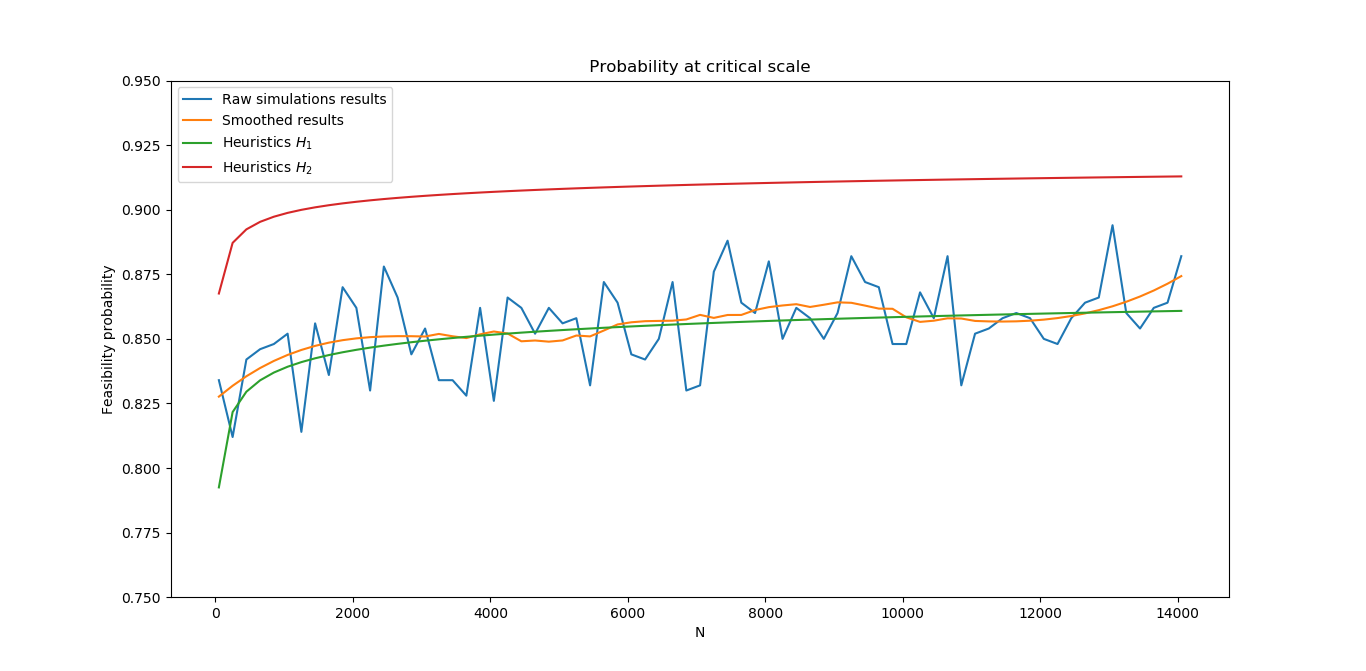}
 \caption{Probability at critical scaling. The blue curve corresponds to the proportion of feasible solutions at critical scaling $\alpha_N^*$ obtained for 500 simulations (for $N$ ranging from 50 to 14050 with a 200-increment) - notice the strong standard deviation. The yellow curve is a smoothed version of the previous curve, obtained by applying a Savitsky-Golay filter. The green curve represents the heuristics $H_1$ defined in \eqref{eq:critical}. The red curve represents the heuristics 
 $H_2$ introduced in Remark \ref{rem:H2}. Notice the substantial discrepancy between $H_1$ and $H_2$.}
\label{fig:critical}
\end{figure}

\begin{proof}[Arguments]
Consider 
$$
x_k = 1+\ee_k^* \frac{A_n}{\alpha_n^* \sqrt{n}} \ones_n + \frac{R_k}{(\alpha_n^*)^2} 
= 1+\frac{Z_k}{\alpha_n^*} + \frac{R_k}{(\alpha_n^*)^2}
= 1+\frac1{\alpha_n^*}\left( Z_k  + \frac{R_k}{\alpha_n^*}\right)\ .
$$
Following Geman and Hwang \cite[Lemma A.1]{geman-1982}, one could prove that $Z_k$ and $R_k$ are asymptotically independent centered 
Gaussian random variables, each with variance one. We thus approximate the quantity $Z_k  + \frac{R_k}{\alpha_n^*}$ by a Gaussian random variable 
with distribution ${\mathcal N}\left(0, 1+\frac 1{(\alpha_n^*)^{2}}\right)$ and set 
\begin{eqnarray*}
x_k &\approx & 1+ \left( \frac{1}{\alpha_n^*}\sqrt{ 1 +\frac 1{(\alpha_n^*)^2}} \right)U_k 
\end{eqnarray*}
where the $U_k$'s are i.i.d. ${\mathcal N}(0,1)$. 
Denote by $\check M^U_n=\min_{k\in [n]} U_k$ then
\begin{eqnarray*}
\PP(x_k>0\, , \ k\in [n]) &\approx& \PP \left( 1 + \left( \frac{1}{\alpha_n^*}\sqrt{ 1 +\frac 1{(\alpha_n^*)^2}} \right)\check M^U_n  >0\right) \ .
\end{eqnarray*}
Recall that standard extreme value convergence results for Gaussian i.i.d. random variables yield 
\begin{equation}\label{eq:EVTzero}
\PP\left\{ \alpha^*_n \left( -\check{M}^U_n - \beta^*_n\right)< x\right\} 
= \PP \left\{ \alpha^*_n ( \check{M}^U_n +\beta^*_n ) > -x\right\} 
\quad \xrightarrow[n\to\infty]{}\quad  G(x)=e^{-e^{-x}}\, ,
\end{equation}
where $\beta_n^*$ is defined in \eqref{def:beta}. Denote by $\Theta(\alpha) = \sqrt{ 1 +\alpha^{-2}}$ then 
$$
\PP \left( 1 +\Theta(\alpha^*_n)\frac{\check{M}^U_n}{\alpha_n^*} >0\right) =\PP \left( \check{M}^U_n> - \frac{\alpha_n^*}{\Theta(\alpha_n^*)}\right)
= \PP \left( \alpha^*_n(\check{M}_n +\beta^*_n) > -\frac{(\alpha^*_n)^2}{\Theta(\alpha_n^*)} +\alpha^*_n \beta^*_n\right)\ .
$$
Notice that 
$$
-\frac{(\alpha^*_n)^2}{\Theta(\alpha_n^*)} +\alpha^*_n \beta^*_n = \frac 12 - \frac 12  \log(4\pi\log n) +{\mathcal O}\left( \frac 1{(\alpha^*_n)^2}\right)
= \frac 12 +\log \frac 1{\sqrt{2\pi} \alpha_n^*} + {\mathcal O}\left( \frac 1{(\alpha^*_n)^2}\right)\ .
$$
Hence
\begin{eqnarray}
\PP \left( 1 +\Theta(\alpha^*_n)\frac{\check{M}^U_n}{\alpha_n^*} >0\right) 
&=& \PP \left( \alpha^*_n ( \check{M}_n +\beta^*_n) >  \frac 12 +\log \frac 1{\sqrt{2\pi} \alpha_n^*} + {\mathcal O}\left( \frac 1{(\alpha^*_n)^2}\right)\right)\ ,\nonumber\\
&\stackrel{(a)}\approx & e^{-\exp\left(\frac 12 +\log \frac 1{\sqrt{2\pi} \alpha_n^*} + {\mathcal O}\left( \frac 1{(\alpha^*_n)^2}\right)\right)} 
\ =\ e^{-\sqrt{\frac{e}{2\pi}}\frac 1{\alpha_n^*}\left( 1+{\mathcal O} ( (\alpha_n^*)^{-2})\right)}\ ,\nonumber\\
&=& 1-\sqrt{\frac e{2\pi}}\frac 1{\alpha_n^*} +\frac 12 \frac e{2\pi}\frac 1{(\alpha_n^*)^2} +{\mathcal O}\left( \frac{1}{(\alpha_n^*)^{3}} \right)\ .
\label{eq:approximate}
\end{eqnarray}
We finally end up with the announced approximation
$$
\PP(x_k>0\, , \ k\in [n])\quad  \approx\quad  H_1(n):=1 - \sqrt{ \frac e{4\pi \log n}} + \frac e{8\pi \log n}\ . 
$$

\begin{rem} \label{rem:H2} A rougher approximation would have been to set $x_k\approx 1+ \frac{Z_k}{\alpha_n^*}$ with $Z_k\sim{\mathcal N}(0,1)$ and to drop the next term $\frac{R_k}{(\alpha_n^*)^2}$ in the heuristics but this would have resulted in the following approximation
$$
\mathbb{P}( x_k>0,\ k\in [n])\quad \approx\quad 1 - \left( 4\pi \log(n)\right)^{-1/2}+\left( 8\pi \log(n)\right)^{-1} =:H_2(n)\, ,
$$ 
which is worst than $H_1(n)$, as illustrated in Figure \ref{fig:critical}.
\end{rem}

Approximation $(a)$ in \eqref{eq:approximate} may look doubtful, especially because the convergence \eqref{eq:EVTzero} is used for growing $x\sim\log(\log n)$. Since it is well-known that convergence in distribution might not capture the convergence of the tails, one may want to switch to the regime of large deviations. We rely on computations made by Vivo \cite{vivo2015large} to confirm that the approximation $(a)$ is legitimate. 

The following large deviations estimate is provided in \cite[Eq. (52)]{vivo2015large}:
$$
 \lim_{n\to\infty} \frac{\log \PP\left\{ M^U_n \ge \xi \beta^*_n\right\} }{ \log n} = - (\xi^2 - 1)\qquad \textrm{for}\quad \xi\ge 1\, ,
$$
which yields the approximation 
\begin{equation}\label{eq:LDP}
\PP\left\{ M^U_n \ge \xi \beta^*_n\right\}  \approx e^{- (\log n)(\xi^2-1)}\, .
\end{equation}
On the other hand, by classical extreme value theory, 
\begin{equation}\label{eq:EVT}
\PP\left\{ M^U_n > (\alpha^*_n)^{-1} x + \beta^*_n\right\} \approx 1 - e^{-e^{-x}}\, .
\end{equation} 
Now, in order to extend the validity of \eqref{eq:EVT} for $x\gg 1$, we consider simultaneously the approximation \eqref{eq:LDP} for $\xi \sim 1$ and \eqref{eq:EVT} for $x\gg 1$, that is
\begin{eqnarray*}
\PP\left\{ M^U_n \ge \xi \beta^*_n\right\}  &\approx& e^{- (\log n)(\xi^2-1)}\approx e^{- 2(\log n)(\xi-1)}\quad \textrm{for}\quad \xi\sim 1\\
\PP\left\{ M^U_n > (\alpha^*_n)^{-1} x + \beta^*_n\right\} &\approx& 1 - e^{-e^{-x}} \approx e^{-x} \quad \textrm{for}\quad x\gg 1
\end{eqnarray*} 
Equating both exponentials yields 
$$
x=2\log n (\xi -1) \quad \Rightarrow \quad \xi = 1+\frac x{2\log n}\, .
$$
This gives us the following rule of thumb: one may apply \eqref{eq:EVT} if $1\ll x\ll \log n$. This condition is fulfilled for $x\sim \log (\log n)$.

\end{proof}

\subsection{Positivity for a non-homogeneous linear system} The results developed so far for the system \eqref{eq:equilibrium} extend to a non-homogeneous (NH) linear system where $\ones_n$ is replaced by a deterministic $n\times 1$ vector $\bs{r}_n$ with slight modifications. In particular, we identify a regime where feasibility and stability occur simultaneously.

Denote by $\br_n=(r_k)$ a $n\times 1$ deterministic vector with positive components and consider the linear system
\begin{equation}\label{eq:non-homo}
\x_n=\br_n +\frac{1}{\alpha_n\sqrt{n}} A_n \x_n\, .
\end{equation}
Introduce the notations 
$$
r_{\min}(n)=\min_{ k\in[n]} r_k\,,\quad  r_{\max}(n) =\max_{k\in[n]} r_k\quad \textrm{and}\quad \sigma_{\br}(n) =\| \br/\sqrt{n}\|=\sqrt{n^{-1} \sum_{k\in[n]} r_k^2}\ .
$$ 
Assume that there exist $\rho_{\min}, \rho_{\max}$ independent from $n$ such that eventually
$$
0\quad <\quad \rho_{\min} \quad \le\quad  r_{\min}(n) \quad \le \quad \sigma_{\br}(n)\quad \le\quad r_{\max}(n) \quad \le\quad \rho_{\max} <\infty\, .
$$
Then 
\begin{theo}[Feasibility - NH case]\label{th:non-homo}
Let $\alpha_n \xrightarrow[n\to\infty]{} \infty$  and denote by $\alpha_n^*=\sqrt{2\log n}$. Let $\x_n=(x_k)_{k\in [n]}$ be the solution of \eqref{eq:non-homo}. 
\begin{enumerate}
\item If there exists $\varepsilon>0$ such that eventually $\alpha_n\le (1-\varepsilon) \frac{\alpha_n^* \sigma_{\br}(n)}{r_{\max}(n)}$ then 
$
\mathbb{P}\left\{ \min_{k\in [n]} x_k>0\right\} \xrightarrow[n\to\infty]{} 0\, .
$

\item If there exists $\varepsilon>0$ such that eventually $\alpha_n\ge (1+\varepsilon)\frac{\alpha_n^* \sigma_{\br}(n)}{r_{\min}(n)}$ then
$
\mathbb{P}\left\{ \min_{k\in [n]} x_k>0\right\} \xrightarrow[n\to\infty]{} 1\, .
$
\end{enumerate}
\end{theo}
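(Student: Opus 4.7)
The plan is to mirror the proof of Theorem~\ref{th:main}, substituting $\br_n$ for $\ones_n$ at every step. Starting from $\x_n = Q_n\br_n$ and unfolding as in \eqref{eq:decomp} one obtains
\begin{equation*}
x_k \;=\; r_k \,+\, \frac{Z_k^{\br}}{\alpha_n} \,+\, \frac{R_k^{\br}}{\alpha_n^{2}},
\qquad
Z_k^{\br} := \frac{1}{\sqrt n}(A_n\br_n)_k,
\qquad
R_k^{\br} := \ee_k^{*}\!\left(\frac{A_n}{\sqrt n}\right)^{\!2} Q_n \br_n.
\end{equation*}
Since the rows of $A_n$ are independent, the $Z_k^{\br}$ are i.i.d.\ ${\mathcal N}(0,\sigma_{\br}^{2}(n))$, and $Z_k^{\br}/\sigma_{\br}(n)$ is a family of i.i.d.\ standard Gaussians to which the extreme-value asymptotics \eqref{eq:MAX}--\eqref{eq:MIN} apply directly: $\min_k Z_k^{\br} = -\sigma_{\br}(n)\,\alpha_n^{*}(1+o_P(1))$.

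The core of the proof is then the extension of Lemma~\ref{lemma:main} to the non-homogeneous remainder. I would work with $\widetilde R_k^{\br} := \varphi_n R_k^{\br}$ and replay the three steps of Section~\ref{section:proof-main} with minor bookkeeping: in the gradient computation for Lemma~\ref{lem:LIP}, each quadratic form $\ones^{*}Q^{*}Q\ones/n$ becomes its $\br$-analogue and is bounded by $9\rho_{\max}^{2}$ using $\varphi_n\|Q\|\le 3$ and $\sigma_{\br}(n)\le\rho_{\max}$, while each scalar $\ee_i^{*}\ones=1$ is replaced by $\ee_i^{*}\br_n\le\rho_{\max}$. The Lipschitz constant $K$ therefore stays uniform in $k$ and $n$. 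The Tsirelson--Ibragimov--Sudakov bound of Proposition~\ref{prop:CONC} and the integration-by-parts estimate of Proposition~\ref{prop:TIGHT} then go through verbatim, giving $\E\max_k(\widetilde R_k^{\br}-\E\widetilde R_k^{\br}) = \mathcal O(\sqrt{\log n})$ and $\E\widetilde R_k^{\br} = \mathcal O(\alpha_n)$ uniformly in $k$. Exchangeability of the $\widetilde R_k^{\br}$ across $k$ is preserved because the rows of $A_n$ are i.i.d., so the bootstrap of Section~\ref{sec:proof-lemma-main} yields
\begin{equation*}
\frac{\max_k R_k^{\br}}{\alpha_n\sqrt{2\log n}}\,\xrightarrow[n\to\infty]{\mathcal P}\,0
\qquad\text{and}\qquad
\frac{\min_k R_k^{\br}}{\alpha_n\sqrt{2\log n}}\,\xrightarrow[n\to\infty]{\mathcal P}\,0.
\end{equation*}

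With the remainder controlled, the two regimes follow from the analog of \eqref{eq:useful}. For part~(1), selecting $k^{*}$ that achieves $\min_k Z_k^{\br}$ and using $r_{k^{*}} \le r_{\max}(n)$ together with the lower bound $\sigma_{\br}(n) \ge \rho_{\min}$ to absorb the remainder into the Gaussian scale $\sigma_{\br}(n)\alpha_n^{*}/\alpha_n$ gives
\begin{equation*}
\min_k x_k \,\le\, r_{\max}(n) - \frac{\sigma_{\br}(n)\,\alpha_n^{*}}{\alpha_n}\bigl(1+o_P(1)\bigr)
\,\le\, r_{\max}(n)\!\left(1 - \frac{1}{1-\varepsilon}+o_P(1)\right),
\end{equation*}
which is strictly negative with probability tending to one under the subcritical hypothesis. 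For part~(2), bounding each term of the decomposition by its infimum symmetrically yields
\begin{equation*}
\min_k x_k \,\ge\, r_{\min}(n) - \frac{\sigma_{\br}(n)\,\alpha_n^{*}}{\alpha_n}\bigl(1+o_P(1)\bigr)
\,\ge\, r_{\min}(n)\!\left(1-\frac{1}{1+\varepsilon}+o_P(1)\right),
\end{equation*}
strictly positive with probability tending to one under the supercritical hypothesis.

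The main technical obstacle is checking that the constants in the extended Lipschitz and tightness lemmas depend on $\br_n$ only through the bounded quantities $\rho_{\max}$ and $\sigma_{\br}(n)$, so that $K$ does not drift with $n$. The uniform lower bound $\sigma_{\br}(n)\ge\rho_{\min}>0$ is precisely what is required to guarantee that $\max_k|R_k^{\br}|/\alpha_n^{2}$ is negligible compared to the Gaussian scale $\sigma_{\br}(n)\alpha_n^{*}/\alpha_n$; without such a lower bound, the subcritical and supercritical thresholds in the theorem could separate by more than a constant factor and the bootstrap would fail. Beyond this bookkeeping, no new probabilistic input is needed relative to Theorem~\ref{th:main}.
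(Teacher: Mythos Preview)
Your approach is the same as the paper's: the decomposition $x_k=r_k+\sigma_{\br}(n)U_k/\alpha_n+R_k^{\br}/\alpha_n^2$ with $U_k$ i.i.d.\ standard Gaussian, the extension of Lemma~\ref{lemma:main} to $R_k^{\br}$ via the Lipschitz/concentration/tightness arguments with constants tracked through $\rho_{\max}$, and the final sandwich bounds on $\min_k x_k$ match the paper's sketch line for line.

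One point needs correction. The random variables $\widetilde R_k^{\br}$ are \emph{not} exchangeable in the non-homogeneous case, and i.i.d.\ rows of $A_n$ do not supply this: the quantity $R_k^{\br}=\ee_k^*(n^{-1/2}A)^2Q\br_n$ depends on all of $A$ through $Q$, and conjugation $A\mapsto PAP^\top$ (which preserves the law of $A$) maps $R_k^{\br}(A)$ to $\ee_{\pi^{-1}(k)}^*(n^{-1/2}A)^2Q\,(P^\top\br_n)$, not to $R_{\pi^{-1}(k)}^{\br}(A)$, because $P^\top\br_n\neq\br_n$. So $\E\widetilde R_k^{\br}$ genuinely varies with $k$. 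Fortunately the bootstrap in Section~\ref{sec:proof-lemma-main} uses exchangeability only through the identity $\max_k\E\widetilde R_k=\E\widetilde R_1$; in the NH setting you should replace this by the weaker statement $\max_k\E\widetilde R_k^{\br}-\E\widetilde R_1^{\br}=\mathcal O(\alpha_n)$, which follows from the \emph{uniform-in-$k$} bound in Proposition~\ref{prop:TIGHT} that you already invoke. This is precisely the point the paper singles out, and with it the Markov step goes through since $\mathcal O(\alpha_n)/(\alpha_n\sqrt{2\log n})\to 0$. Everything else in your outline is correct.
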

\begin{rem} Contrary to the homogeneous system where there is a sharp transition at $\alpha^*_n=\sqrt{2\log(n)}$, the situation is not as clean-cut here and there is a buffer zone 
$$
\alpha_n \in \left[ \frac{ \sigma_{\br}(n)}{r_{\max}(n)}\sqrt{2\log(n)} \, , \, \frac{\sigma_{\br}(n)}{r_{\min}(n)} \sqrt{2\log(n)}\right]
$$ 
in which the study of the feasibility is not clear. This buffer zone is illustrated in Figure \ref{fig:NH}. 
\end{rem}

In Figure \ref{fig:NH}, we illustrate the transition toward feasibility for a non-homogeneous system \eqref{eq:non-homo} in the case where 
deterministic vector $\br_N$ is equally distributed over $[1,3]$, i.e.
\begin{equation}\label{eq:def-rn}
\br_N(i)= 1+\frac{2i}N\, , \quad 1\le i\le N\, .
\end{equation}
We introduce the quantities
\begin{equation}\label{eq:threshold}
t_1=\lim_N \frac{\sqrt{2} \sigma_{\br}(N)}{\br_{\max}}= \frac{\sqrt{2\int_0^1 (1+2x)^2\, dx}}3=0.98\qquad \textrm{and}\qquad 
t_2=\lim_N \frac{\sqrt{2} \sigma_{\br}(N)}{\br_{\min}}=\sqrt{2\int_0^1 (1+2x)^2\, dx}=2.94\, .
\end{equation}
As one may notice, the transition region is wider than in the homogeneous case.
\begin{figure}[ht] 
\centering
  \includegraphics[width=\linewidth]{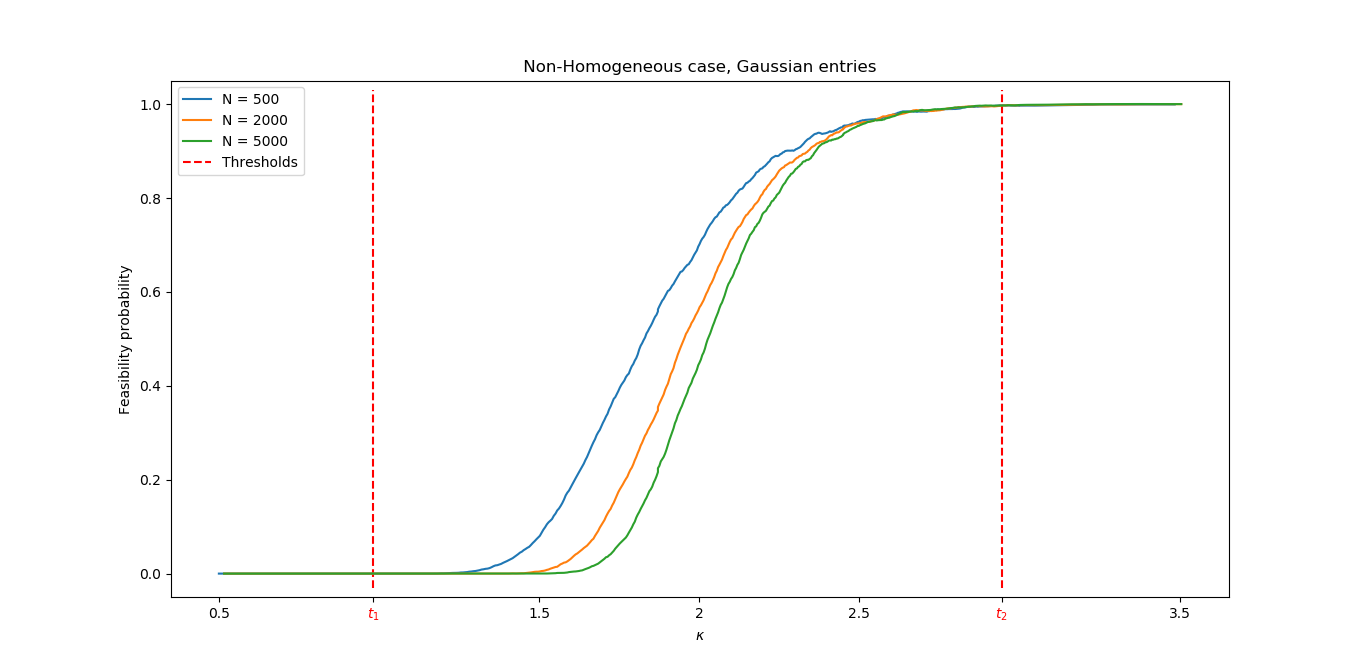}
 \caption{Transition toward feasibility for a NH system. The curves are obtained as for Figure \ref{fig:phase-transition} for $\br_N$ defined in \eqref{eq:def-rn}. The thresholds $t_1$ and $t_2$ are computed in \eqref{eq:threshold}.}
\label{fig:NH}
\end{figure}

\begin{proof}[Elements of proof] We have 
$$
x_k = \ee_k^* \, Q\, \br_n = r_k +\frac 1{\alpha} \frac{\sum_{i=1}^n r_i A_{ki}}{\sqrt{n}} + \frac 1{\alpha^2} \ee^*_k \left( \frac A{\sqrt{n}}\right) Q\, \br_n 
= r_k +\frac{\sigma_{\br}(n)}{\alpha} U_k + \frac 1{\alpha^2} R^{(\br)}_k
$$
where the $U_k$'s are i.i.d. ${\mathcal N}(0,1)$. One can check by carefully reading the proof of Lemma \ref{lemma:main} that the conclusions of the lemma apply to $\Rr_k$. In particular, one may check that Proposition \ref{prop:TIGHT} holds uniformly in $k\in [n]$ in the non-homogeneous case. Denote by $\check M=\min_{k\in[n]} U_k$, then
\begin{eqnarray*}
\min_{k\in[n]} x_k &\le & r_{\max}(n) +\frac{\sigma_{\br}(n)}{\alpha} \check M + \frac{\max_{k\in[n]}\Rr_k}{\alpha^2}\ ,\\
&\le& r_{\max}(n) +\frac{\sigma_{\br}(n) \alpha^*}{\alpha} \left( \frac{\check M +\beta^*}{\alpha^*} - \frac{\beta^*}{\alpha^*} 
+ \frac{\max_{k\in[n]}\Rr_k}{\sigma_{\br}(n) \alpha^* \alpha}\right)\quad 
=\quad r_{\max}(n) +\frac{\sigma_{\br}(n) \alpha^*}{\alpha} \left( -1 +o_P(1)\right)\ .
\end{eqnarray*}
The first statement of the theorem follows. Similarly,
\begin{eqnarray*}
\min_{k\in[n]} x_k &\ge & r_{\min}(n) +\frac{\sigma_{\br}(n)}{\alpha} \check M + \frac{\min_{k\in[n]}\Rr_k}{\alpha^2}\, , \\
&\ge& r_{\min}(n) +\frac{\sigma_{\br}(n) \alpha^*}{\alpha} \left( \frac{\check M +\beta^*}{\alpha^*} - \frac{\beta^*}{\alpha^*} 
+ \frac{\min_{k\in[n]}\Rr_k}{\sigma_{\br}(n) \alpha^* \alpha}\right)\quad 
=\quad r_{\min}(n) +\frac{\sigma_{\br}(n) \alpha^*}{\alpha} \left( -1 +o_P(1)\right)\ .
\end{eqnarray*} 
Proof of Theorem \ref{th:non-homo} is completed.
\end{proof}

A non homogeneous system \eqref{eq:non-homo} is associated to the following Lotka-Volterra system
$$
\frac{dx_k(t)}{dt} = x_k(t)\, \left( r_k - x_k(t) + \frac{1}{\alpha_n \sqrt{n}} \sum_{\ell\in[n]} A_{k\ell} x_{\ell}(t)\right)
$$
for $k\in [n]$ whose jacobian at equilibrium is still given by \eqref{eq:jacobian}. 

\begin{theo}[Stability - NH case]\label{th:stability-nonhomo} Let $\x_n=(x_k)_{k\in [n]}$ be the solution of \eqref{eq:non-homo} and assume that 
$$
\lplus\ :=\ \limsup_{n\to \infty}\frac{\alpha^*_n\,\sigma_{\br}(n)}{\alpha_n\, r_{\min}(n)}\quad <\quad 1\, .
$$ 
Denote by ${\mathcal S}_n$ the spectrum of $\J(\x_n)$. Then for every $\lambda \in {\mathcal S}_n$, 
$$
\max_{\lambda\in {\mathcal S}_n} \min_{k\in [n]} \left| \lambda + x_k\right| \xrightarrow[n\to\infty]{\mathcal P} 0\qquad \textrm{and}\qquad \max_{\lambda\in {\mathcal S}_n}\mathrm{Re}\, \lambda \le -(1-\lplus) + o_P(1)\, .
$$
\end{theo}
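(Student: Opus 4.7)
The plan is to replay the proof of Theorem \ref{th:stability} verbatim, substituting each homogeneous ingredient by its non-homogeneous counterpart obtained en route to Theorem \ref{th:non-homo}. The structure is unchanged: first, derive sharp two-sided asymptotics for the coordinates of $\x_n$; then transfer these to the spectrum of $\J(\x_n)$ via Bauer--Fike applied to the splitting $\J(\x_n) = -\diag(\x_n) + \diag(\x_n)\, A_n/(\alpha_n\sqrt{n})$.

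For the first step, I would start from the decomposition $x_k = r_k + \sigma_{\br}(n)\, U_k/\alpha_n + \Rr_k/\alpha_n^2$ with $U_k$ i.i.d.\ $\mathcal{N}(0,1)$, already used in the proof of Theorem \ref{th:non-homo}. Combining the extreme value asymptotics \eqref{eq:MAX}--\eqref{eq:MIN} for $\{U_k\}$ with the non-homogeneous version of Lemma \ref{lemma:main} for $\Rr_k$, I would obtain
\[
\min_{k\in[n]} x_k \ \ge\ r_{\min}(n) - \frac{\sigma_{\br}(n)\,\alpha_n^*}{\alpha_n} + o_P(1), \qquad \max_{k\in[n]} x_k \ \le\ r_{\max}(n) + \frac{\sigma_{\br}(n)\,\alpha_n^*}{\alpha_n} + o_P(1).
\]
Under $\lplus < 1$ together with $r_{\min}(n) \ge \rho_{\min}$, the first quantity is eventually bounded below by $r_{\min}(n)(1-\lplus) + o_P(1) > 0$; under $r_{\max}(n) \le \rho_{\max}$, the second is $O_P(1)$.

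For the second step, Bauer--Fike \cite[Theorem 6.3.2]{book-horn-johnson} provides, for each $\lambda \in \mathcal{S}_n$, an index $k \in [n]$ such that
\[
|\lambda + x_k| \ \le\ \Bigl\|\diag(\x_n)\,\frac{A_n}{\alpha_n\sqrt{n}}\Bigr\| \ \le\ \frac{\max_k x_k}{\alpha_n}\,\Bigl\|\frac{A_n}{\sqrt{n}}\Bigr\| \ \le\ \frac{(\rho_{\max}+o_P(1))(2+o_P(1))}{\alpha_n} \ = \ o_P(1),
\]
uniformly in $\lambda$; this is the first assertion. For the second, $\Re(\lambda) \le -x_k + o_P(1) \le -\min_k x_k + o_P(1)$, and plugging in the step-one lower bound yields $\max_{\lambda \in \mathcal{S}_n}\Re(\lambda) \le -r_{\min}(n)(1-\lplus) + o_P(1)$, which specializes to the announced bound when $r_{\min}(n) \equiv 1$.

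The only genuinely delicate point is transferring Lemma \ref{lemma:main} (along with its companions Lemma \ref{lem:LIP} and Proposition \ref{prop:TIGHT}) from the vector $\ones_n/\sqrt{n}$ to the vector $\br_n/\sqrt{n}$ that now appears in $\Rr_k$. Because $\|\br_n/\sqrt{n}\| = \sigma_{\br}(n) \le \rho_{\max}$ is uniformly bounded, the Gaussian concentration, Lipschitz gradient estimate, and integration-by-parts arguments all go through unchanged up to absolute constants, as flagged in the elements of proof of Theorem \ref{th:non-homo}; no new ingredient is required beyond this verification.
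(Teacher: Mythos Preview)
Your proposal is correct and follows precisely the route the paper intends: the paper does not give an explicit proof of this theorem, but it is meant to be the non-homogeneous transcription of the proof of Theorem~\ref{th:stability}, using the decomposition and the $\Rr_k$-estimates from the proof of Theorem~\ref{th:non-homo} together with Bauer--Fike, which is exactly what you do. Your final bound $\max_{\lambda\in\mathcal S_n}\mathrm{Re}\,\lambda \le -r_{\min}(n)(1-\lplus)+o_P(1)$ is in fact the sharp one that the argument yields; the paper's stated constant $-(1-\lplus)$ tacitly assumes $r_{\min}(n)\ge 1$ (or is a slight oversight), and you are right to flag the specialization.
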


\subsection{Beyond the Gaussian case} The results presented so far heavily rely on the Gaussianity of the entries. A closer look at $\x_n$'s components reveals that Gaussianity plays an important role at three levels:
$$
x_k = 1 +\frac 1{\alpha}Z_k  + \frac 1{\alpha^2} R_k\quad \textrm{where} \quad  Z_k=\frac{\sum_{i\in [n]} A_{ki}}{\sqrt{n}}\ .
$$ 
\begin{enumerate}
\item Gaussian entries immediatly imply that the $Z_k$'s are independent standard Gaussian random variables, for which the study of the extrema is standard. 
\end{enumerate}
In the case where the entries are not Gaussian any more, the $Z_k$'s are no longer Gaussian but this issue can easily be circumvented since by the CLT the $Z_k$'s converge in distribution to a standard Gaussian. The extreme value study of such families of $Z_k$'s has been carried out in \cite[Propositions 2 \& 3]{anderson1997maxima}. 
\begin{enumerate}[resume]
\item The study of the extreme values of $(R_k, k\in [n])$ in this article relies on the sub-Gaussiannity of $\widetilde R_k(A)$ which is a consequence of Gaussian concentration for Lipschitz functionals.
\item Poincaré's inequality is used to prove that $\widetilde R_1(A)/(\alpha\sqrt{2\log(n)})$ goes to zero in probability, which is crucial to establish Lemma \ref{lemma:main}.
\end{enumerate}
If the distribution of the entries is strongly log-concave in the sense of \cite[Eq. (3.48)]{wainwright2019high}, then \cite[Theorem 3.16]{wainwright2019high} yields the sub-Gaussiannity of $\widetilde R_1(A)$ together with Poincaré's inequality. In particular, Theorems \ref{th:main} and \ref{th:stability} hold verbatim for entries $(A_{ij})$ i.i.d., centered with variance one and whose distribution is strongly log-concave. 

The case of bounded and/or discrete entries is not covered and remains open although the simulations (see Figure \ref{fig:NG}) indicate that a similar phase transition occurs.

\begin{figure}[ht] 
\centering
  \includegraphics[width=\linewidth]{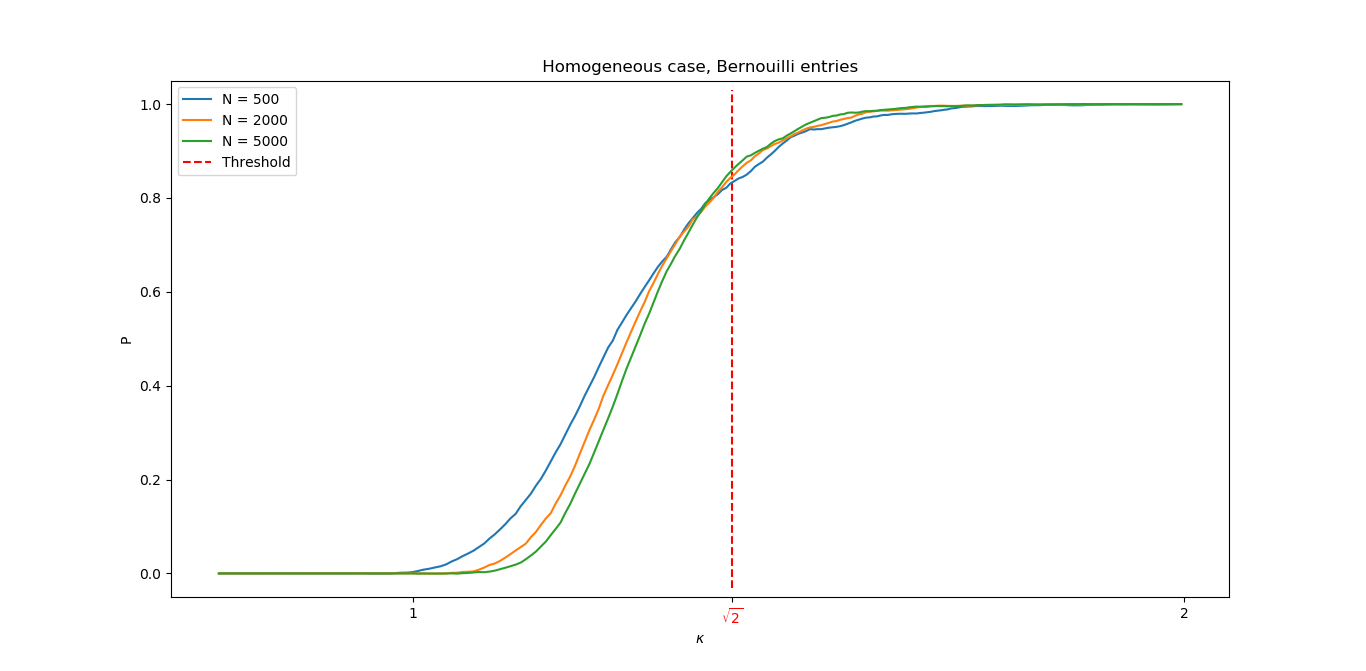}
 \caption{Transition toward feasibility for non-Gaussian entries. The curves are obtained as for Figure \ref{fig:phase-transition} in the case where the matrix's entries are Bernouilli $\pm1$.}
\label{fig:NG}
\end{figure}

\bibliographystyle{abbrv}
\bibliography{../../Bibliographies/mathematics}

\noindent {\sc Pierre Bizeul}\\
Ecole Normale Supérieure Paris-Saclay\\
61, avenue du Président Wilson\\
94235 Cachan Cedex\\
e-mail: {\tt pbizeul@ens-paris-saclay.fr}\\

\noindent {\sc Jamal Najim},\\
Laboratoire d'Informatique Gaspard Monge, UMR 8049\\
CNRS \& Universit\'e Paris Est Marne-la-Vall\'ee\\
5, Boulevard Descartes,\\
Champs sur Marne,
77454 Marne-la-Vall\'ee Cedex 2, France\\
e-mail: {\tt najim@univ-mlv.fr}\\

\end{document}